\numberwithin{equation}{section}
\newtheorem{theorem}{Theorem}
\numberwithin{theorem}{section}
\newtheorem{proposition}[theorem]{Proposition}
\newtheorem{lemma}[theorem]{Lemma}
\newtheorem{corollary}[theorem]{Corollary}
\theoremstyle{definition}
\newtheorem{definition}[theorem]{Definition}
\newtheorem{remark}[theorem]{Remark}
\newtheorem{example}[theorem]{Example}
\newcommand{\GL}{{\operatorname{GL}}}
\newcommand{\Gmn}{G_{m,n}}
\newcommand{\id}{\operatorname{id}}
\newcommand{\Int}{\operatorname{Int}}
\newcommand{\inv}{^{-1}}
\newcommand{\lra}{\longrightarrow}
\newcommand{\Mat}{{\operatorname{M}}}
\newcommand{\M}{\operatorname{M}}
\newcommand{\Mn}{\Mat_n}
\newcommand{\Mnm}{(\Mn)^m}
\newcommand{\PGL}{{\operatorname{PGL}}}
\newcommand{\PGLn}{{\operatorname{PGL}_n}}
\newcommand{\pglntau}{\PGLn\rtimes\langle\tau\rangle}
\newcommand{\pntau}{P_{n,\tau}}
\newcommand{\PGO}{{\operatorname{PGO}}}
\newcommand{\PGOn}{\PGO_n}
\newcommand{\PGSpn}{{\operatorname{PGSp}_n}}
\newcommand{\Q}{\operatorname{Q}}
\newcommand{\Skew}{\operatorname{Skew}}
\newcommand{\Sym}{\operatorname{Sym}}
\newcommand{\tp}{^t}
\newcommand{\tr}{\operatorname{tr}}
\newcommand{\UD}{\operatorname{{\it UD}}}
\newcommand{\UDmn}{\UD(m,n)}
\newcommand{\Z}{\operatorname{Z}}
\begin{document}

\date{August 12, 2008} %% <<<--- UPDATE THIS !!!!!!!!!!!!!!!!!!!!!!

\title%
[Central simple algebras with involution]%
{Central Simple Algebras with Involution:\\ A Geometric Approach}

\author{Nikolaus Vonessen}
\address{Department of Mathematical Sciences, University of Montana,
  Missoula, MT 59812-0864, USA}
\email{\href{mailto://nikolaus.vonessen@umontana.edu}{nikolaus.vonessen@umontana.edu}}
\urladdr{\href{http://www.math.umt.edu/vonessen}{www.math.umt.edu/vonessen}}

\begin{abstract}
  Let $k$ be an algebraically closed base field of characteristic
  zero.  The category equivalence between central simple algebras and
  irreducible, generically free $\PGLn$-varieties is extended to the
  context of central simple algebras with involution.  The associated
  variety of a central simple algebra with involution comes with an
  action of \hbox{$\pglntau$}, where $\tau$ is the automorphism of
  $\PGLn$ given by $\tau(h)=(h\inv)^{\text{transpose}}$.  Basic properties of an
  involution are described in terms of the action of \hbox{$\pglntau$}
  on the associated variety, and in particular in terms of the
  stabilizer in general position for this action.
\end{abstract}

\subjclass[2000]{16W10, 14L30, 16K20}

%%%%%%%%%%%%%%%%%%%%%%%%%%%%%%%%%%%
% 16w10 Rings with involution
% 14L30 Group actions on varieties and schemes
% 16K20 Finite-dimensional division rings
%%%%%%%%%%%%%%%%%%%%%%%%%%%%%%%%%%%

\keywords{Central simple algebra with involution, associated variety,
  group action, stabilizer in general position, rational quotient}

\maketitle

\tableofcontents

\section{Introduction}

We work over an algebraically closed base field $k$ of characteristic
zero.  In this paper, a central simple algebra is a simple algebra
finite-dimensional over its center with the additional property that
the center is a finitely-generated field extension of $k$.  We begin
by reviewing some facts, mostly from~\cite{rv1}, about central simple
algebras and their associated varieties.  Given an irreducible,
generically free $\PGLn$-variety $X$, $k_n(X)$ denotes the set of
$\PGLn$-equivariant rational maps from $X$ to $\Mn$, the algebra of
$n\times n$ matrices over $k$.  Under the usual addition and
multiplication of maps into $\Mn$, $k_n(X)$ turns out to be a central
simple algebra; its center can be identified with the fixed field
$k(X)^\PGLn$ (\cite[Lemma~8.5]{reichstein}; see also
\cite[Lemma~2.8]{rv2}).

Conversely, let $A$ be a central simple algebra (whose center is, by
our general convention, a finitely-generated field extension of $k$).
Then there is an irreducible, generically free $\PGLn$-variety $X$
such that $A$ is isomorphic to $k_n(X)$.  In fact, $X$ is unique up to
birational isomorphism of $\PGLn$-varieties, and it is called the
$\PGLn$-variety {\em associated} to $A$ (\cite[Theorem~7.8 and
Section~8]{rv1}).

Now let $A$ and $B$ be central simple algebras with associated
$\PGLn$-varieties $X$ and $Y$, respectively.  A $\PGLn$-equivariant
dominant rational map $\varphi\colon X\dasharrow Y$ induces an algebra
homomorphism (necessarily an embedding) $\varphi^*\colon
k_n(Y)\hookrightarrow k_n(X)$ defined by $f\mapsto f\circ\varphi$.
Conversely, an algebra homomorphism $\alpha\colon
k_n(Y)\hookrightarrow k_n(X)$ induces a $\PGLn$-equivariant dominant
rational map $\alpha_*\colon X\dasharrow Y$.  The expected functorial
properties hold: $(\varphi^*)_*=\varphi$, $(\alpha_*)^*=\alpha$,
$(\id_{k_n(X)})_*=\id_X$, $(\id_X)^*=\id_{k_n(X)}$,
$(\psi\circ\varphi)^*=\varphi^*\circ\psi^*$, and
$(\beta\circ\alpha)_*=\alpha_*\circ\beta_*$ (\cite[Section~8]{rv1}).
In fact, the functor defined by
\[X\mapsto k_n(X),\quad \bigl(\varphi\colon X\dasharrow Y\bigr)
   \mapsto \bigl(\varphi^*\colon k_n(Y)\hookrightarrow k_n(X)\bigr)\]
is a contravariant equivalence of categories from the category
$\textit{Bir}_n$ of irreducible, generically free $\PGLn$-varieties
(with dominant rational $\PGLn$-equivariant maps as morphisms) to the
category $\textit{CS}_n$ of central simple algebras of degree $n$
(with algebra homomorphisms as morphisms) (\cite[Theorem~1.2]{rv1}).

\subsection*{The Results.}
We denote by $\tau$ the automorphism of order two of $\PGLn$ defined
by 
\[\tau(h)=(h\inv)\tp=(h\tp)\inv\,,\]
where $t$ denotes transposition.  Central for our work is the
semidirect product
\[\pntau:=\pglntau\,.\]
Throughout, we identify $\PGLn$ with the subgroup $\PGLn\times 1$ of
$\pntau$.  

Given a $\pntau$-variety $X$ which is $\PGLn$-generically free, the
central simple algebra $k_n(X)$ has an involution $\sigma_{X,\tau}$
induced by the action of $\tau$ on $X$:  For $f\in k_n(X)$,
\[\sigma_{X,\tau}(f)=\text{transposition}\circ f\circ \tau\,,\]
see Lemma~\ref{lem4}.  Conversely, given a central simple algebra $A$
of degree $n$ with involution $\sigma$, there is an irreducible
$\pntau$-variety $X$ which is $\PGLn$-generically free such that
$(A,\sigma)$ and $(k_n(X),\sigma_{X,\tau})$ are isomorphic as algebras
with involution (Proposition~\ref{prop6}).  Moreover, $X$ is unique up
to birational isomorphism of $\pntau$-varieties
(Corollary~\ref{cor-to-lem9}).  We call $X$ the $\pntau$-variety {\em
  associated} to $(A,\sigma)$.  A model for $X$ can usually be found
in $\Mnm$ for some $m$ (unless $n=2$ and the involution is symplectic),
see Lemma~\ref{lem:new}.

The category equivalence between $\textit{Bir}_n$ and $\textit{CS}_n$
extends to this setting.  In the sequel, the morphisms in categories
of algebras with involution are the $k$-algebra homomorphisms
preserving involutions.  And on the geometric side, the morphisms are
the dominant rational maps which are equivariant for the action of the
given group.

\begin{theorem}\label{thm}
  There is a contravariant category equivalence
\[
\begin{array}{ccc}
\fbox{\parbox{4.4cm}{\begin{center}
Irreducible $\pntau$-varieties which are generically free as $\PGLn$-varieties
\end{center}}}
&\!\!\longleftrightarrow\!\!&
\fbox{\parbox{4.4cm}{\begin{center}
Central simple algebras of degree~$n$ with involution
\end{center}}}\hphantom{\,.}
\end{array}
\]
given by the contravariant functor
  \[ \begin{array}{ccc}
  \parbox{4.4cm}{\begin{center} $X$ \end{center}}
    &\!\!\longmapsto\!\! &
  \parbox{4.4cm}{\begin{center}$(k_n(X),\sigma_{X,\tau})$\end{center}}\,\, \\
  \bigl(\varphi \colon X \dasharrow Y\bigr)
    &\!\!\longmapsto\!\!
         &      \bigl(\varphi^* \colon k_n(Y) \hookrightarrow k_n(X)\bigr) \,.
     \end{array}
  \]
\end{theorem}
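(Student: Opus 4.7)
The plan is to build the new equivalence on top of the already-established equivalence between $\textit{Bir}_n$ and $\textit{CS}_n$. Forgetting the $\tau$-action on the geometric side and the involution on the algebraic side, the proposed functor reduces to the old one $X\mapsto k_n(X)$, $\varphi\mapsto\varphi^*$. Granted that result, it remains to verify: (a) the functor is well-defined, i.e., if $\varphi\colon X\dasharrow Y$ is $\pntau$-equivariant and dominant, then $\varphi^*$ preserves the involutions; (b) essential surjectivity on objects; and (c) fullness, i.e., every involution-preserving algebra homomorphism arises as $\varphi^*$ for some $\pntau$-equivariant $\varphi$. Faithfulness is automatic. Item (b) is Proposition~\ref{prop6} together with Corollary~\ref{cor-to-lem9}, and $\sigma_{X,\tau}$ is an involution by Lemma~\ref{lem4}.

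The crux is a single computation translating preservation of involutions by $\varphi^*$ into $\tau$-equivariance of $\varphi$. Writing $\tau_X$ and $\tau_Y$ for the actions of $\tau\in\pntau$ on $X$ and $Y$, and $t$ for matrix transposition, one checks directly for a dominant $\PGLn$-equivariant $\varphi\colon X\dasharrow Y$ and $g\in k_n(Y)$ that
\[
\varphi^*\bigl(\sigma_{Y,\tau}(g)\bigr) = t\circ g\circ\tau_Y\circ\varphi,\qquad
\sigma_{X,\tau}\bigl(\varphi^*(g)\bigr) = t\circ g\circ\varphi\circ\tau_X.
\]
So $\varphi^*$ preserves the involutions if and only if $g\circ\tau_Y\circ\varphi = g\circ\varphi\circ\tau_X$ for every $g\in k_n(Y)$. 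Using $\tau_X^2=\id$, this rewrites as $g\circ\widetilde{\psi} = g\circ\varphi$ for all $g\in k_n(Y)$, where $\widetilde{\psi}:=\tau_Y\circ\varphi\circ\tau_X$.

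The key observation is that although $\tau_Y\circ\varphi$ is only $\tau$-twisted equivariant (since $\tau_Y(h\cdot y)=\tau(h)\cdot\tau_Y(y)$), the further right-composition with $\tau_X$ reinstates honest $\PGLn$-equivariance; a quick check gives $\widetilde{\psi}(h\cdot x)=h\cdot\widetilde{\psi}(x)$. Thus $\widetilde{\psi}$ is a dominant $\PGLn$-equivariant rational map $X\dasharrow Y$. The displayed condition then reads $\widetilde{\psi}^*=\varphi^*$ in $\textit{CS}_n$, and applying $(\psi^*)_*=\psi$ from the underlying category equivalence gives $\widetilde{\psi}=\varphi$. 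This rearranges to $\tau_Y\circ\varphi=\varphi\circ\tau_X$, which is exactly $\tau$-equivariance of $\varphi$ and, combined with the $\PGLn$-equivariance, amounts to $\pntau$-equivariance. Both directions of this chain are available, yielding (a) directly and (c) by applying the reverse implication to $\varphi:=\alpha_*$ for a given involution-preserving $\alpha$, using $(\alpha_*)^*=\alpha$.

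The principal obstacle is the honest-versus-$\tau$-twisted equivariance subtlety; once one notices that right-composing with $\tau_X$ converts the twisted map $\tau_Y\circ\varphi$ into an honest $\PGLn$-equivariant map, the argument reduces to a direct appeal to the known $\textit{Bir}_n$--$\textit{CS}_n$ equivalence. The functoriality identities $(\psi\circ\varphi)^*=\varphi^*\circ\psi^*$, $(\id_X)^*=\id_{k_n(X)}$, and so on, are imported verbatim from the $\PGLn$-setting.
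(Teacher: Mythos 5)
Your proposal is correct, and its overall architecture matches the paper's: well-definedness of the functor via the involution $\sigma_{X,\tau}$ and the equivariance computation, essential surjectivity via the construction of an associated $\pntau$-variety, and fullness/faithfulness via the dictionary between $\tau$-equivariance of $\varphi$ and involution-preservation of $\varphi^*$. The one place where you genuinely diverge is the converse direction (given that $\alpha=\varphi^*$ preserves involutions, show $\tau_Y\circ\varphi=\varphi\circ\tau_X$). The paper gets there by proving a separate separation lemma: if $f\circ\varphi_1=f\circ\varphi_2$ for all $f\in k_n(Y)$, where $\varphi_1,\varphi_2\colon X\dasharrow Y$ are arbitrary dominant rational maps, then $\varphi_1=\varphi_2$; this is proved by embedding $Y$ in $\Mnm$ and testing against the coordinate projections, and it is needed precisely because $\tau_Y\circ\varphi$ and $\varphi\circ\tau_X$ are only $\tau$-twisted $\PGLn$-equivariant. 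Your trick of precomposing with $\tau_X$ to form $\widetilde{\psi}=\tau_Y\circ\varphi\circ\tau_X$, which \emph{is} honestly $\PGLn$-equivariant and dominant, lets you instead read the condition as $\widetilde{\psi}^*=\varphi^*$ inside $\textit{Bir}_n$ and conclude $\widetilde{\psi}=\varphi$ from the already-known identity $(\psi^*)_*=\psi$. This is a clean shortcut that avoids proving any new lemma; what the paper's route buys in exchange is a slightly more general separation statement (valid for non-equivariant maps) that is independent of the functoriality package. Both arguments are complete; your equivariance check for $\widetilde{\psi}$ (using $\tau^2=\id$ and $\tau_X(hx)=\tau(h)\tau_X(x)$) is exactly the point that makes the shortcut legitimate.
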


This result is proved at the end of Section~\ref{sec:cat-equivalence}.
We show next how basic properties of an involution of a central simple
algebra are related to properties of the $\pntau$-action on the
associated variety.  Recall that an action of a linear algebraic
group~$G$ on a variety~$X$ has a {\em stabilizer in general position}
if there is a subgroup $S$ of $G$ such that for $x\in X$ in general
position, the stabilizer of $x$ in $G$ is conjugate to $S$.

\begin{theorem}\label{thm-stab}
  Let $A$ be a central simple algebra of degree~$n$ with
  involution~$\sigma$ and associated $\pntau$-variety~$X$.  Then the
  $\pntau$-action on $X$ has a stabilizer in general position, say
  $S$.
  \begin{itemize}
  \item[\upshape{(a)}]The involution $\sigma$ is of the first kind iff
    $S\neq1$.
  \item[\upshape{(b)}]The involution $\sigma$ is of the second kind
    {\upshape(}i.e, is unitary{\upshape)}
    iff $S=1$ {\upshape(}i.e., the action of $\pntau$ on $X$ is
    generically free{\upshape)}. 
  \item[\upshape{(c)}]The involution $\sigma$ is orthogonal iff $S$ is
    conjugate to $\{1,\tau\}$.
  \item[\upshape{(d)}]The involution $\sigma$ is symplectic iff $S$ is
    conjugate to $\bigl\{1,\tau 
    \bigl[\begin{smallmatrix}0&\,\,I\\-I&\,\,0\end{smallmatrix}\bigr]\bigr\}$.
  \end{itemize}
\end{theorem}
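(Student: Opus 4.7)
The plan is to describe explicitly the stabilizer $S_x=(\pntau)_x$ at a generic $x\in X$ and match the three possibilities to the three types of involutions. Because the $\PGLn$-action on $X$ is generically free, $S_x\cap\PGLn=1$ for generic $x$, so $S_x$ injects into $\pntau/\PGLn=\langle\tau\rangle$ and has order $1$ or $2$; in the order-two case $S_x=\{1,(h,\tau)\}$ with $(h,\tau)^2=1$, which after unwinding the multiplication in $\pntau$ is the condition $h\cdot(h\inv)\tp=1$ in $\PGLn$. Lifting $h$ to $\GL_n$ and examining the resulting scalar, this becomes $h\tp=\pm h$, so $h$ is represented by a symmetric or a skew-symmetric matrix. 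Now the defining formula $\sigma(f)=\text{transposition}\circ f\circ\tau$ applied to a scalar $z\in Z(A)=k(X)^\PGLn\subset k_n(X)$ shows that $\sigma|_{Z(A)}$ is the pullback $z\mapsto z\circ\tau$, which is trivial (i.e.\ $\sigma$ is of the first kind) iff $\tau$ acts trivially on the rational quotient $X/\!/\PGLn$, iff $\tau\cdot x\in\PGLn\cdot x$ for generic $x$, iff $|S_x|=2$. This yields (a) and (b).

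For (c) and (d), working in the first-kind case I pick a generic $x$ with $(h,\tau)x=x$, so $\tau x=h\inv x$; the $\PGLn$-equivariance of $f\in k_n(X)$ (where $\PGLn$ acts on $\Mn$ by conjugation) then gives
\[\sigma(f)(x)=f(h\inv x)\tp=\bigl(h\inv f(x)h\bigr)\tp=(h\tp)\,f(x)\tp\,(h\tp)\inv.\]
After extending scalars, $A\otimes_{Z(A)}k(X)\cong\Mn(k(X))$ and the extended involution is adjunction by some $U\in\GL_n(k(X))$ with $U\tp=\pm U$ globally. The identity above, evaluated at $x$, forces $U(x)$ to be a $k^\times$-multiple of $h\tp$, so $U$ is symmetric (resp.\ skew) as a rational matrix iff $h$ is symmetric (resp.\ skew). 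By the standard classification of first-kind involutions on a split central simple algebra this gives $\sigma$ orthogonal in the symmetric case and symplectic in the skew case.

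Finally, conjugation by $(g,1)\in\pntau$ sends $(h,\tau)$ to $(ghg\tp,\tau)$, which is the usual congruence action on $h$. Over the algebraically closed field $k$, every non-degenerate symmetric matrix is congruent to $I$, and every non-degenerate skew matrix (forcing $n$ even) is congruent to $\bigl[\begin{smallmatrix}0&I\\-I&0\end{smallmatrix}\bigr]$; hence $S_x$ is $\pntau$-conjugate to $\{1,\tau\}$ in the orthogonal case and to $\{1,\tau\bigl[\begin{smallmatrix}0&I\\-I&0\end{smallmatrix}\bigr]\}$ in the symplectic one. The step I expect to require the most care is confirming that a stabilizer in general position actually exists: one must show that the rational map $x\mapsto h_x$ into $\PGLn$, defined on a dense open of $X$ once the first-kind case is in force, lands in a single $\PGLn$-congruence class on a dense open. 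This will follow from the irreducibility of $X$ combined with the disjointness of the symmetric and skew loci in $\GL_n$ and the fact that each of them forms a single non-degenerate congruence class over $k$.
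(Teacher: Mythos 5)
Your argument is correct and follows the same skeleton as the paper's: reduce to $(k_n(X),\sigma_{X,\tau})$; use generic freeness of $\PGLn$ to see that the generic stabilizer has order $1$ or $2$ with nontrivial element $\tau$ composed with a symmetric or skew-symmetric class $h$ in $\PGLn$; characterize the first/second kind via the induced $\tau$-action on $\Z(A)=k(X)^{\PGLn}$ (this step needs, as the paper notes, that finitely many invariants separate generic $\PGLn$-orbits); and finish with the congruence classification of nondegenerate symmetric and skew forms over $k$ to get a single conjugacy class of stabilizers. The one place where you genuinely diverge is the key lemma matching the symmetry type of $h_x$ to the type of $\sigma$: the paper (Lemma~\ref{lem:U}) works pointwise at a single $x$, comparing $\sigma$ with $\tilde\sigma=\Int(u)\circ\mathrm{transposition}$ on $\Mn$ and using the specialization result \cite[Lemma~7.4(a)]{reichstein} to transfer the dimension count of $\Sym$ and $\Skew$ together with \cite[Propositions~2.6(1) and 2.7]{KMRT}; you instead extend scalars to $k(X)$, write the extended involution as $\Int(U)\circ t$ with $U\tp=\pm U$, and compare $U(x)$ with $h_x\tp$. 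Your route is clean and has the bonus that the constancy of the symmetry type of $x\mapsto h_x$ on a dense open (the step you flag as delicate at the end, and for which your ``disjoint loci plus irreducibility'' argument would also work once $x\mapsto h_x$ is seen to be a rational map) falls out immediately because $U$ is a single matrix over the field $k(X)$; but it silently uses two facts that should be cited rather than asserted, namely that evaluation at the generic point induces $A\otimes_{\Z(A)}k(X)\cong\Mn(k(X))$ and that $\{f(x):f\in A\}$ spans $\Mn$ for $x$ in general position (both are in \cite[Lemma~7.4]{reichstein}; the second is exactly what forces $U(x)$ to be proportional to $h_x\tp$). The paper's pointwise dimension count uses the same specialization input but avoids invoking the generic splitting explicitly. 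Either way the existence of the stabilizer in general position then follows as you say, and the disjointness of the symmetric and skew congruence classes gives the ``only if'' halves of (c) and (d).
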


In part~(d), $g_0=
\bigl[\begin{smallmatrix}0&\,\,I\\-I&\,\,0\end{smallmatrix}\bigr]$
denotes the image in $\PGLn$ of the matrix
$\bigl(\begin{smallmatrix}0&\,\,I\\-I&\,\,0\end{smallmatrix}\bigr)$,
where $I$ is the identity matrix of order~$\frac{n}{2}$.  Note that
since $S$ is only defined up to conjugacy, one can always take it to
be $\{1,\tau\}$ if $\sigma$ is orthogonal, and to be $\{1,\tau g_0\}$
if $\sigma$ is symplectic.

This theorem is proved in Section~\ref{sec:orth-and-symp}.  The
existence of the stabilizer in general position follows easily from a
deep theorem of Richardson (see Remark~\ref{rem:richardson}) but will
be deduced independently from the two well-known facts that all
symmetric matrices in $\Mn$ are congruent to each other, and that all
skew-symmetric matrices in $\Mn$ are congruent to each other.  These
facts are also otherwise needed in the proof of
Theorem~\ref{thm-stab}, and this approach makes its proof only
marginally longer.

We conclude with two remarks.

\begin{remark}\label{rem:1.3}
  Let $\sigma$ be an involution of the first kind.  Whether $\sigma$
  is orthogonal or symplectic can also be described in the following,
  more geometric fashion: The proof of Theorem~\ref{thm-stab} in
  Section~\ref{sec:orth-and-symp} shows that in the case of
  involutions of the first kind, for all $x\in X$ in general position,
  there is a unique $g_x\in \PGLn$ such that $\tau(x)=g_x(x)$;
  moreover, there are two mutually exclusive cases: either the
  preimages of $g_x$ in $\GL_n$ are symmetric matrices for all $x\in
  X$ in general position, or they are skew-symmetric matrices for all
  $x\in X$ in general position (cf.~Lemma~\ref{lem:U}).  Accordingly,
  we say that the action of $\tau$ on $X$ is of {\em symmetric} or
  {\em skew-symmetric} type.  Finally, $\sigma$ is orthogonal if and
  only if the action of $\tau$ on $X$ is of symmetric type, and
  $\sigma$ is symplectic if and only if the action of $\tau$ on $X$ is
  of skew-symmetric type (see Lemma~\ref{lem:U}).  See
  Section~\ref{sec:ex-rho-3} for an example.
\end{remark}

\begin{remark}\label{rem:cat-equivalences}
  Theorems~\ref{thm} and~\ref{thm-stab} can be used to give a new
  proof of the following well-known contravariant category
  equivalences (see\cite[\S29]{KMRT} for similar results under less
  restrictive hypotheses):
\[
\begin{array}{ccc}
\fbox{\parbox{6cm}{\begin{center}
Central simple algebras of degree~$n$ with unitary involution
\end{center}}}
&\!\!\longleftrightarrow\!\!&
\fbox{\parbox{3.8cm}{\begin{center}
Irreducible generically free
$\pntau$-varieties 
\end{center}}}
\\ \\
\fbox{\parbox{6cm}{\begin{center}
Central simple algebras of degree~$n$ with orthogonal involution
\end{center}}}
&\!\!\longleftrightarrow\!\!&
\fbox{\parbox{3.8cm}{\begin{center}
Primitive generically free
$\PGOn$-varieties 
\end{center}}}
\\ \\
\fbox{\parbox{6cm}{\begin{center}
Central simple algebras of degree~$n$ with symplectic involution
\end{center}}}
&\!\!\longleftrightarrow\!\!&
\fbox{\parbox{3.8cm}{\begin{center}
Primitive generically free
$\PGSpn$-varieties 
\end{center}}}
\end{array}
\]
The first equivalence follows immediately from Theorems~\ref{thm}
and~\ref{thm-stab}.  For the others, one needs in addition the
following fact: {\em Let $G$ be a linear algebraic group, $S$ a closed
  subgroup and $N$ the normalizer of $S$ in $G$.  There is a category
  equivalence between the category of generically free primitive
  $N/S$-varieties and the category of primitive $G$-varieties with
  stabilizer~$S$ in general position}, %
cf.~\cite{kordonskii}, or \cite[Lemma~3.2]{rv-affine-stable} (which
in turn is based on \cite[Section~1.7]{popov}).  Note that if
$G=\pntau$ and $S$ is equal to $\langle\tau\rangle$ or $\langle\tau
g_0\rangle$, then $N/S$ is isomorphic to $\PGOn$ or $\PGSpn$,
respectively, as easy computations show.
\end{remark}

\subsection*{Conventions.}
As already stated, $k$ is an algebraically closed base field of
characteristic zero, and a central simple algebra is a simple algebra
finite-dimensional over its center with the additional property that
the center is a finitely-generated field extension of $k$.  All
algebra homomorphisms and involutions are assumed to be $k$-linear.
We always denote by $m$ and $n$ integers $\geq 2$.  We write $\Mn$ for
$\Mn(k)$, $\GL_n$ for $\GL_n(k)$, $\PGLn$ for $\PGLn(k)$, etc.  The
transpose of a matrix $a$ is denoted by $a\tp$.  If
$a=(a_1,\ldots,a_m)$ is an $m$-tuple of matrices, then
$a\tp=(a_1\tp,\ldots,a_m\tp)$.

\subsection*{Acknowledgments.}
The author is grateful to Zinovy Reichstein for helpful comments and
suggestions.

\section{A Category Equivalence}
\label{sec:cat-equivalence}
 
The generic matrix ring 
\[\Gmn=k\{X_1,\ldots,X_m\}\]
has an involution $\rho$ which is the $k$-linear map which reverses
the order of any monomial in the $m$ generic $n\times n$-matrices
$X_1,\ldots,X_m$.  That is,
\begin{equation}\label{eq-rho}
  \rho(X_{i_1}X_{i_2}\cdots X_{i_j})
        = X_{i_j}\cdots X_{i_2} X_{i_1}\,. 
\end{equation}
To see that $\rho$ is well-defined, consider the correspondingly
defined map $\rho_F$ on the free algebra $F=k\{x_1,\ldots,x_m\}$.  It
suffices to show that if $p=p(x_1,\ldots,x_m)\in F$ is a polynomial
identity for $n\times n$-matrices, then so is $\rho_F(p)$.  Well, let
$a=(a_1,\ldots,a_m)$ be any $m$-tuple of $n\times n$-matrices.  Then
\[\rho_F(p)(a_1\tp,\ldots,a_m\tp)=p(a_1,\ldots,a_m)\tp\,.\]
That is,
\[\rho_F(p)(a_1,\ldots,a_m)=p(a_1\tp,\ldots,a_m\tp)\tp=0\,.\]

The involution $\rho$ of $\Gmn$ is induced from transposition of
$m$-tuples of $n\times n$ matrices in the sense that for any 
$p=p(X_1,\ldots,X_m)\in\Gmn$,
\[\rho(p)(a_1,\ldots,a_m)=p(a_1\tp,\ldots,a_m\tp)\tp\,,\]
or
\begin{equation}\label{eq1}
  \rho(p)(a)=p(a\tp)\tp\,.
\end{equation}

\begin{lemma}\label{lem4}
  Let $X$ be an irreducible $\pntau$-variety which is
  $\PGLn$-generically free, so that $A=k_n(X)$ is a central simple
  algebra of degree~$n$.  Then the action of $\tau$ on $X$ induces an
  involution $\sigma_{X,\tau}$ of $A$ as follows: For $f\in A$,
  \[\sigma_{X,\tau}(f)=\text{\upshape transposition}\circ f\circ\tau\,.\]
  That is, for $x\in X$ in general position,
  \begin{equation}\label{eq2}
    \sigma_{X,\tau}(f)(x)=f(\tau x)\tp\,.
  \end{equation}
\end{lemma}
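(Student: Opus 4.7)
The plan is to verify four things in turn: (i) $\sigma_{X,\tau}(f)$ is a well-defined rational map $X\dasharrow\Mn$; (ii) it is $\PGLn$-equivariant, so that $\sigma_{X,\tau}(f)\in A=k_n(X)$; (iii) $\sigma_{X,\tau}$ is $k$-linear and reverses products; (iv) $\sigma_{X,\tau}\circ\sigma_{X,\tau}=\id_A$.

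For (i), note that $\tau\colon X\to X$ is an automorphism of varieties (as an element of the acting group $\pntau$), and transposition is a regular map on $\Mn$. Hence for a rational map $f\colon X\dasharrow\Mn$, the composition $x\mapsto f(\tau x)\tp$ is again a rational map defined on a dense open subset of $X$, namely $\tau\inv(\operatorname{dom} f)$.

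The main point, and the only slightly delicate step, is (ii). Let $h\in\PGLn$ and let $x\in X$ be in general position. Working inside the semidirect product $\pntau$, the conjugate $\tau h\tau\inv$ equals $\tau(h)=(h\inv)\tp$, so
\[\tau(h\cdot x)=(\tau h\tau\inv)\cdot\tau(x)=(h\inv)\tp\cdot\tau(x).\]
Using the $\PGLn$-equivariance of $f$ and the fact that $\PGLn$ acts on $\Mn$ by conjugation,
\[f\bigl(\tau(h\cdot x)\bigr)=(h\inv)\tp\,f(\tau x)\,h\tp.\]
Taking the transpose of both sides gives
\[\sigma_{X,\tau}(f)(h\cdot x)=\bigl((h\inv)\tp f(\tau x)h\tp\bigr)\tp=h\,f(\tau x)\tp\,h\inv=h\cdot\sigma_{X,\tau}(f)(x),\]
which is precisely the required equivariance. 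So $\sigma_{X,\tau}(f)\in A$.

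For (iii), $k$-linearity is immediate from the linearity of both composition with $\tau$ and transposition. For the antimultiplicative property, evaluate at a general $x\in X$:
\[\sigma_{X,\tau}(fg)(x)=(fg)(\tau x)\tp=\bigl(f(\tau x)g(\tau x)\bigr)\tp=g(\tau x)\tp f(\tau x)\tp=\sigma_{X,\tau}(g)(x)\,\sigma_{X,\tau}(f)(x).\]
For (iv), using $\tau^2=1$ in $\pntau$,
\[\sigma_{X,\tau}^2(f)(x)=\sigma_{X,\tau}(f)(\tau x)\tp=\bigl(f(\tau^2 x)\tp\bigr)\tp=f(x).\]
I do not anticipate any real obstacle beyond carefully keeping track of the transpose/inverse bookkeeping in (ii); once the identity $\tau h\tau\inv=(h\inv)\tp$ is applied, everything else reduces to direct evaluation on points in general position.
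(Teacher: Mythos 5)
Your proof is correct and follows essentially the same route as the paper's: the key step in both is the identity $\tau(h\cdot x)=(h\inv)\tp\cdot\tau(x)$ from the semidirect product structure, followed by the same transpose bookkeeping for equivariance, antimultiplicativity, and $\sigma_{X,\tau}^2=\id$.
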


Note the similarity of formulas~\eqref{eq1} and~\eqref{eq2}.

\begin{proof}
For simplicity, set $\sigma=\sigma_{X,\tau}$.
It is clear that $\sigma(f)$ is a rational map $X\dasharrow\Mn$.
To check that it is $\PGLn$-equivariant, let $h\in\PGLn$ and $f\in A$.
Keeping in mind that $\PGLn$ acts on $\Mn$ by conjugation, and that $f$
is $\PGLn$-equivariant, one sees that for $x\in X$ in general
position,
\begin{align*}
\sigma(f)(hx) &= f(\tau h x)\tp = [f((h\inv)\tp\tau x)]\tp
               = [(h\inv)\tp f(\tau x) h\tp]\tp \\
              &= h f(\tau x)\tp h\inv
               = h [\sigma(f)(x)] h\inv\,.
\end{align*}
That is, $\sigma(f)$ is $\PGLn$-equivariant and belongs thus to $A$.

It is clear that $\sigma^2$ is the identity map of $A$, and that
$\sigma$ is $k$-linear.  Finally, let $f,g\in A$.  Then for $x\in X$
in general position,
\[\sigma(fg)(x) = [(fg)(\tau x)]\tp = g(\tau x)\tp \cdot f(\tau x)\tp
 =[\sigma(g)\sigma(f)](x)\,.\]
So $\sigma(fg)=\sigma(g)\sigma(f)$, and $\sigma$ is indeed an
involution of $A$.
\end{proof}

\begin{example} {\it The involution $\rho$ of $\UDmn$.}
  \label{example:rho} %
  The universal division algebra $\UDmn$ is obtained from $\Gmn$ by
  localizing at the non-zero central elements.  It can also be
  obtained from $\Gmn$ by inverting only the nonzero central symmetric
  elements (since for $p,s \in \Gmn$ with $s$ nonzero and central,
  $ps\inv=(p\rho(s)) (s\rho(s))\inv)$.  Hence the involution $\rho$ of
  $\Gmn$ extends to an involution of $\UDmn$, which we will also
  denote by $\rho$ (cf.\ \cite[Proposition~2.13.17]{rowenI}).  One
  checks easily that for all $p,s\in \Gmn$ with $s$ nonzero and
  central, $\rho(ps\inv)=\rho(p)\rho(s)\inv$.

  The associated variety of $\UDmn$ is $X=\Mnm$, where $\PGLn$ acts by
  simultaneous conjugation.  Letting $\tau$ act by simultaneous
  transposition turns $X=\Mnm$ into a
  $\pntau$-variety which is
  $\PGLn$-generically free.  Formula~\eqref{eq1} says that for all
  $p\in\Gmn$ and all $x\in X$, $\rho(p)(x)=p(\tau
  x)\tp=\sigma_{X,\tau}(x)$.  A straightforward calculation shows that
  for all $f\in\UDmn$, $\rho(f)(x)=\sigma_{X,\tau}(x)$ for all $x\in
  X$ in general position.  Hence $\rho=\sigma_{X,\tau}$.  We will
  return to this example in Example~\ref{example:rho:first-kind}
  and in Section~\ref{sec:ex-rho-3}.
  \hfill\qedsymbol
\end{example}

\begin{proposition}\label{prop6}
  Let $A$ be a central simple algebra of degree~$n$ with
  involution~$\sigma$.  As always, we assume that the center of $A$ is
  a finitely generated field extension of $k$.  Then there is an
  irreducible $\pntau$-variety $X$ which is $\PGLn$-generically free
  such that $(A,\sigma)$ and $(k_n(X),\sigma_{X,\tau})$ are isomorphic
  as algebras with involution.
\end{proposition}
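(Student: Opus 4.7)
My plan is to realize $(A,\sigma)$ as a specialization of the generic matrix ring equipped with a suitably twisted involution.

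First, I would choose generators $a_1,\ldots,a_m$ of $A$ as a $k$-algebra (which exist since $A$ is finitely generated over $k$) with the additional property that the set $\{a_1,\ldots,a_m\}$ is closed under $\sigma$; one obtains this by starting from any generating set and adjoining the $\sigma$-images, and by enlarging $m$ I may arrange $m\geq 2$. Then $\sigma$ permutes the $a_i$ via an involutive permutation $\pi\in S_m$, so $\sigma(a_i)=a_{\pi(i)}$. I would then define a $k$-linear antiautomorphism $\tilde\rho$ of $\Gmn$ by $\tilde\rho(X_{i_1}\cdots X_{i_l})=X_{\pi(i_l)}\cdots X_{\pi(i_1)}$; its well-definedness follows by a straightforward variant of the argument given for $\rho$. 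By the universal property of $\Gmn$ there is a surjective $k$-algebra homomorphism $\varphi\colon\Gmn\to A$ with $\varphi(X_i)=a_i$, and since $\varphi\circ\tilde\rho$ and $\sigma\circ\varphi$ are antihomomorphisms agreeing on the generators, they are equal.

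Next I would translate $\tilde\rho$ into geometry. Equip $\Mnm$ with the $\pntau$-structure in which $\PGLn$ acts by simultaneous conjugation and $\tau$ acts by $(c_1,\ldots,c_m)\mapsto(c_{\pi(1)}\tp,\ldots,c_{\pi(m)}\tp)$; a direct check gives the semidirect-product relation $\tau h\tau=\tau(h)$ and the formula $\tilde\rho(p)(c)=p(\tau c)\tp$ (parallel to~\eqref{eq1}). The kernel $\mathfrak{q}=\ker\varphi$ is a $\tilde\rho$-invariant prime of $\Gmn$, so its contraction $\mathfrak{p}=\mathfrak{q}\cap\Z(\Gmn)$ is a $\tau$-invariant prime of $\Z(\Gmn)=k[\Mnm]^{\PGLn}$, corresponding to a $\tau$-invariant irreducible closed subvariety $V\subseteq\Mnm/\!/\PGLn$. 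I would then take $X$ to be the Zariski closure in $\Mnm$ of the preimage of $V$ inside the ($\pntau$-invariant) open set $U\subseteq\Mnm$ on which $\PGLn$ acts freely. This $X$ is an irreducible $\pntau$-subvariety of $\Mnm$, $\PGLn$-generically free by construction.

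The next step is to identify $(k_n(X),\sigma_{X,\tau})$ with $(A,\sigma)$. By construction $X$ is a model of the $\PGLn$-variety associated to $A$ in the sense of~\cite{rv1}, so there is a canonical isomorphism $k_n(X)\cong A$ under which the $i$-th projection $X\dasharrow\Mn$ corresponds to $a_i$. Formula~\eqref{eq2} then gives $\sigma_{X,\tau}(X_i)(c)=X_i(\tau c)\tp=c_{\pi(i)}=X_{\pi(i)}(c)$, which under the isomorphism matches $\sigma(a_i)=a_{\pi(i)}$. Since $\sigma_{X,\tau}$ and $\sigma$ are antihomomorphisms agreeing on a $k$-algebra generating set, they agree everywhere.

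The hard step will be verifying that $X$ as constructed really is a model of the $\PGLn$-variety associated to $A$; equivalently, that the restriction map $\Gmn\to k_n(X)$ has kernel exactly $\mathfrak{q}$. This should reduce to generic freeness of the $\PGLn$-action on $U$ together with the standard identification of prime ideals in a prime PI-ring via their contractions to the center. Once this is in place, everything else---the semidirect-product relation, the formula analogous to~\eqref{eq1}, and the matching of involutions via~\eqref{eq2}---is routine, having been built into the choice of generators and the definition of $\tilde\rho$.
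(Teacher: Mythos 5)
Your overall strategy---lift $\sigma$ to a twisted version of $\rho$ on $\Gmn$, realize the twist geometrically as a modified $\tau$-action on $\Mnm$, and cut $X$ out of $\Mnm$ using the kernel of the evaluation map---is essentially the one the paper uses. The one genuinely different design choice is how you normalize $\sigma$ on the generators: you take a generating \emph{set} that is $\sigma$-stable, so that $\sigma$ acts by an involutive permutation $\pi$ and $\tau$ acts on $\Mnm$ by permute-and-transpose; the paper instead takes a basis of $\sigma$-eigenvectors ($\sigma(r_i)=\epsilon_ir_i$, $\epsilon_i=\pm1$), so that $\tau$ acts by signed transposition. Both normalizations work and the verifications (well-definedness of the twisted $\rho$, the semidirect-product relation, the analogue of~\eqref{eq1}) go through the same way.

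There are, however, two real gaps. First, your opening claim that $A$ is finitely generated as a $k$-algebra is false in general, so the surjection $\varphi\colon\Gmn\to A$ need not exist. Only the \emph{center} of $A$ is assumed finitely generated, and that as a \emph{field}; for example $\M_n(k(t))$ is not affine over $k$ (the entries of all products of finitely many matrices lie in a finitely generated, hence proper, $k$-subalgebra of $k(t)$; more generally a simple affine PI algebra over $k$ is finite-dimensional over $k$). The fix is what the paper does: choose a finite-dimensional $\sigma$-stable subspace $V$ generating a prime subalgebra $R$ with total ring of fractions $A$, map $\Gmn$ onto $R$, and recover $A$ as $\Q(\Gmn/I)\cong k_n(X)$; this also repairs your final step, since $\sigma$ and $\sigma_{X,\tau}$ then agree on $R$ and hence on $A=\Q(R)$. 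Second, the step you yourself flag as ``hard''---that your $X$, built as the closure of the preimage of a subvariety of the central quotient, really is the associated variety---is left undone, and your route through the contraction $\ker\varphi\cap\Z(\Gmn)$ needs the Azumaya-locus correspondence between primes of $\Gmn$ and primes of its center to be proved or cited. The paper sidesteps this entirely by setting $X=\mathcal{Z}(I)$ and invoking \cite[Theorems~6.4 and~7.8]{rv1}, which already establish that $\mathcal{Z}(I)$ is an associated variety of $A$ with $k_n(\mathcal{Z}(I))\cong\Q(\Gmn/I)$; you should do the same rather than reconstruct $X$ from the center.
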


Here $\sigma_{X,\tau}$ is the involution defined in Lemma~\ref{lem4}.
We call $X$ the {\it $\pntau$-variety associated to} ($A$, $\sigma$).
We will see below that $X$ is unique up to birational isomorphism of
$\pntau$-varieties (Corollary~\ref{cor-to-lem9}).  Lemma~\ref{lem:new}
shows that up to birational isomorphism, a model for $X$ can in most
cases be found in $\Mnm$.

\begin{proof}
  There is a finite-dimensional, $\sigma$-stable $k$-subspace $V$ of
  $A$ such that the $k$-algebra $R$ generated by $V$ is prime, and
  such that $A$ is the total ring of fractions of $R$.  Choose a basis
  $r_1,\ldots,r_m$ of $V$ consisting of eigenvectors of $\sigma$.
  More precisely, $\sigma(r_i)=\epsilon_ir_i$ where $\epsilon_i=\pm1$.
  Note that $R=k\{r_1,\ldots,r_m\}$.
  
  Consider the homomorphism $\phi\colon\Gmn\lra R$ defined by
  $X_i\mapsto r_i$.  Denote its kernel by $I$.  Recall that
  $\mathcal{Z}(I)$ is the set of common zeroes of the elements of $I$
  among the points in a certain $\PGLn$-stable dense open subset of
  $\Mnm$, see \cite[Definition~3.1]{rv1}.  Set $X=\mathcal{Z}(I)$.
  Then by construction, $X$ is an associated variety of $A$, see
  \cite[Theorems~6.4 and~7.8]{rv1}.  Moreover, $\phi$ induces
  $k$-algebra isomorphisms from $\Gmn/I$ onto $R$, and from
  $\Q(\Gmn/I)=k_n(X)$ onto $A$.  We will denote both of these
  isomorphisms by $\bar\phi$.
  
  We first lift $\sigma$ to an involution $\tilde\sigma$ of $\Gmn$.
  For $p(X_1,\ldots,X_m)\in\Gmn$, define
  \[\tilde\sigma(p)=\rho(p(\epsilon_1X_1,\ldots,e_mX_m))\,,\]
  where $\rho$ is the involution of $\Gmn$ defined in \eqref{eq-rho}.
  Using that the assignment $X_i\mapsto \epsilon_iX_i$ defines an
  automorphism of $\Gmn$, one easily checks that $\tilde\sigma$ is
  indeed an involution of $\Gmn$.  Moreover,
  \begin{align*}
    \phi(p(\epsilon_1X_1,\ldots,\epsilon_mX_m)) 
        &= p(\epsilon_1r_1,\ldots,\epsilon_mr_m)\\
        & = p(\sigma(r_1),\ldots,\sigma(r_m)) = \sigma(\phi(\rho(p)))\,.
  \end{align*}
  Replacing $p$ by $\rho(p)$, one sees that
  \begin{equation}\label{eqn3}
    \phi\circ\tilde\sigma=\sigma\circ\phi\,.
  \end{equation}
  In particular, the kernel $I$ of $\phi$ is $\tilde\sigma$-stable.
  Thus the involution $\tilde\sigma$ of $\Gmn$ induces
  involutions $\overline{\tilde\sigma}$ of $\Gmn/I$ and of
  $\Q(\Gmn/I)=k_n(X)$.  Since $\phi\circ\tilde\sigma=\sigma\circ\phi$,
  the $k$-algebra isomorphism $\bar\phi\colon k_n(X)\lra A$ is
  actually an isomorphism of central simple algebras with involution.
  In order to prove the proposition, we may thus assume that
  $A=k_n(X)$, that $R=\Gmn/I$, and that the image of the generic
  matrix $X_i$ in $R$ is $r_i$.  So $\phi$ is now the natural
  projection from $\Gmn$ onto $R=\Gmn/I$, and
  $\overline{\tilde\sigma}=\sigma$.

  We will now make $X=\mathcal{Z}(I)$ into a
  $\pntau$-variety, and show that the
  involution $\sigma$ on $A=k_n(X)$ is equal to $\sigma_{X,\tau}$.

  We first define an action of $\tau$ on $\Mnm$ as follows:
  \[\tau(a_1,\ldots,a_m)=(\epsilon_1a_1\tp,\ldots,\epsilon_ma_m\tp)\]
  Then for $p\in\Gmn$ and $a=(a_1,\ldots,a_m)\in\Mnm$,
  \begin{align*}
    p(\tau a) &= p(\epsilon_1a_1\tp,\ldots,\epsilon_ma_m\tp)\\
      & = \bigl([\rho(p(\epsilon_1X_1,\ldots,\epsilon_mX_m))]
             (a_1,\ldots,a_m)\bigr)\tp\\
      & = \bigl(\tilde\sigma(p)(a)\bigr)\tp\,.
  \end{align*}
  This shows that
  \begin{equation}\label{eqn4}
    \tilde\sigma(p)(a)=p(\tau a)\tp\,.
  \end{equation}
  It also shows that if $p\in I$ and $a\in X=\mathcal{Z}(I)$, then
  $p(\tau a)=0$ (since $\tilde\sigma(p)\in I$).  Hence $\tau(a)\in X$.
  Consequently, $\tau(X)=X$.
  
  We verify next that $\Mnm$ is a
  $\pntau$-variety.  To show this, it
  suffices to check that for $h\in\PGLn$ and
  $a=(a_1,\ldots,a_m)\in\Mnm$,
  \[[\tau h](a)=[(h\inv)\tp\tau](a)\,.\]
  Well,
  \begin{align*}
    [\tau h](a) &= \tau(ha_1h\inv,\ldots,ha_mh\inv)
        = \bigl(\epsilon_1(ha_1h\inv)\tp,\ldots,\epsilon_m(ha_mh\inv)\tp\bigr)\\
       &=\bigl((h\inv)\tp(\epsilon_1a_1\tp)h\tp,\ldots,
           (h\inv)\tp(\epsilon_ma_m\tp)h\tp\bigr)=[(h\inv)\tp\tau](a)\,.
  \end{align*}
  So $\Mnm$ is indeed a $\pntau$-variety,
  and so is its closed subvariety $X$ (which is both $\PGLn$- and
  $\langle\tau\rangle$-stable).  In particular, the action of $\tau$
  on $X$ defines now the involution $\sigma_{X,\tau}$ on $A=k_n(X)$.
  It only remains to show that $\sigma=\sigma_{X,\tau}$, i.e., that
  for all $f\in A$ and all $a\in X$ in general position,
  \[\sigma(f)(a)=f(\tau a)\tp\,,\]
  cf.~Lemma~\ref{lem4}.  It suffices to verify this for $f\in
  R=\Gmn/I$.  So let $p\in\Gmn$ such that $f=\phi(p)$.  Then by
  definition, $f(a)=p(a)$ for all $a\in X=\mathcal{Z}(I)$.  Hence
  \begin{align*}
    \sigma(f)(a) &\stackrel{\phantom{\eqref{eqn4}}}{=} [\sigma(\phi(p))](a)
                  \stackrel{\eqref{eqn3}}{=}[\phi(\tilde\sigma(p))](a)
                  = \tilde\sigma(p)(a)\\
                 &\stackrel{\eqref{eqn4}}{=}p(\tau a)\tp 
                  = [\phi(p)(\tau a)]\tp
                  = f(\tau a)\tp\,.\qedhere
  \end{align*}
\end{proof}

\begin{lemma}\label{lem9}
  Let $X$ and $Y$ be irreducible
  $\pntau$-varieties which are
  $\PGLn$-generically free.  Denote by $\sigma_{X,\tau}$ and
  $\sigma_{Y,\tau}$ the involutions of $k_n(X)$ and $k_n(Y)$ induced
  by the actions of $\tau$ on $X$ and $Y$, respectively.
  \begin{itemize}
  \item[(a)]Let $\varphi\colon X\dasharrow Y$ be a
    $\pntau$-equivariant dominant rational
    map.    
    Then the induced $k$-algebra homomorphism $\varphi^*\colon
    k_n(Y)\lra k_n(X)$ is a homomorphism of algebras with involution.
  \item[(b)] Let $\alpha\colon k_n(Y)\lra k_n(X)$ be a homomorphism of
    algebras with involution.  Then the induced $\PGLn$-equivariant
    dominant rational map $\alpha_*\colon X\dasharrow Y$ is
    $\pntau$-equivariant.
  \end{itemize}
\end{lemma}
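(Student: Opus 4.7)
My plan is to treat (a) as a short direct computation and reduce (b) to (a) via the functorial identity $(\alpha_*)^*=\alpha$ supplied by the $\PGLn$-category equivalence of \cite[Theorem~1.2]{rv1}. The only real obstacle appears in~(b), since the naive candidate $\alpha_*\circ\tau$ is not $\PGLn$-equivariant --- it only satisfies a $\tau$-twisted equivariance; the fix will be to twist on both sides and exploit $\tau^2=\id$.

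For part (a), I would evaluate both $\varphi^*(\sigma_{Y,\tau}(f))$ and $\sigma_{X,\tau}(\varphi^*(f))$ at a general point $x\in X$ using formula~\eqref{eq2} from Lemma~\ref{lem4}. Since $\varphi$ is $\tau$-equivariant, i.e.\ $\varphi(\tau x)=\tau\varphi(x)$, both sides collapse in one line to $f(\varphi(\tau x))\tp$, giving $\varphi^*\sigma_{Y,\tau}=\sigma_{X,\tau}\varphi^*$.

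For part (b), given an involution-preserving $\alpha\colon k_n(Y)\to k_n(X)$, the goal is to show that $\alpha_*$ intertwines the $\tau$-actions on $X$ and $Y$. I would set
\[
\beta\;:=\;\tau\circ\alpha_*\circ\tau\colon X\dasharrow Y
\]
and first check, using the group relation $\tau h=\tau(h)\tau$ inside $\pntau$, that the two $\tau$-twists cancel so that $\beta$ is $\PGLn$-equivariant (and dominant, as $\alpha_*$ is dominant and $\tau$ is an automorphism). By the functoriality $(\beta^*)_*=\beta$, proving $\beta=\alpha_*$ reduces to proving $\beta^*=\alpha$; and once $\beta=\alpha_*$ is known, the relation $\tau\circ\alpha_*\circ\tau=\alpha_*$ rearranges via $\tau^2=\id$ to the desired $\tau\circ\alpha_*=\alpha_*\circ\tau$.

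Finally, to prove $\beta^*=\alpha$ I would unpack the hypothesis $\alpha\circ\sigma_{Y,\tau}=\sigma_{X,\tau}\circ\alpha$ using formula~\eqref{eq2} together with $\alpha(f)(x)=f(\alpha_*(x))$; this yields the pointwise identity $f(\tau\alpha_*(z))=f(\alpha_*(\tau z))$ at generic $z\in X$. Substituting $z=\tau x$ and applying $\tau^2=\id$ then gives
\[
\beta^*(f)(x)=f(\tau\alpha_*(\tau x))=f(\alpha_*(x))=\alpha(f)(x),
\]
which finishes the argument. The only genuine judgement call in the whole proof is choosing the doubly-twisted $\beta$: this single move restores $\PGLn$-equivariance and lets the $\PGLn$-category equivalence do the remaining bookkeeping.
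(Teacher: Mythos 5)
Your proof is correct, and part (a) coincides with the paper's argument. Part (b), however, takes a genuinely different route. The paper shows directly that the involution-compatibility of $\alpha$ forces $f\circ(\tau_Y\circ\alpha_*)=f\circ(\alpha_*\circ\tau_X)$ for all $f\in k_n(Y)$, and then invokes a separate separation lemma (proved by embedding $Y$ in $\Mnm$ and testing against the coordinate projections) asserting that two dominant rational maps into a generically free $\PGLn$-variety which are not distinguished by any $f\in k_n(Y)$ must coincide; that lemma is needed precisely because $\tau_Y\circ\alpha_*$ and $\alpha_*\circ\tau_X$ are not themselves $\PGLn$-equivariant, so the faithfulness of the functor from \cite[Theorem~1.2]{rv1} cannot be applied to them directly. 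Your double-twist $\beta=\tau_Y\circ\alpha_*\circ\tau_X$ circumvents this: the relation $\tau h\tau\inv=(h\inv)\tp$ and $\tau^2=\id$ make $\beta$ honestly $\PGLn$-equivariant and dominant, the pointwise identity $f(\tau\alpha_*(z))=f(\alpha_*(\tau z))$ (extracted from $\alpha\circ\sigma_{Y,\tau}=\sigma_{X,\tau}\circ\alpha$ exactly as you describe, after cancelling the transposes) gives $\beta^*=\alpha=(\alpha_*)^*$, and then $(\beta^*)_*=\beta$ yields $\beta=\alpha_*$. What your approach buys is that it needs no auxiliary separation statement beyond the already-established category equivalence for $\PGLn$-varieties; what the paper's approach buys is a reusable and slightly more general separating lemma (valid for arbitrary, not necessarily equivariant, dominant rational maps into $Y$). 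Both are complete proofs; just make sure, when writing it up, to note explicitly that $\tau$ acts on $X$ and $Y$ by biregular automorphisms (part of the $\pntau$-variety structure), so that $\beta$ is indeed a morphism in the category $\textit{Bir}_n$ to which $(\beta^*)_*=\beta$ applies.
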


\begin{proof}
  (a) Let $f\in k_n(Y)$.  We have to show that
  $\varphi^*(\sigma_{Y,\tau}(f))=\sigma_{X,\tau}(\varphi^*(f))$.  By
  definition, $\varphi^*(f)=f\circ\varphi$.  So we have to show that
  \[\sigma_{Y,\tau}(f)\circ\varphi=\sigma_{X,\tau}(f\circ\varphi)\,. \]
  By Lemma~\ref{lem4}, this reduces to showing that
  \[(f\circ\tau_Y)\circ\varphi=(f\circ\varphi)\circ\tau_X\,, \]
  where $\tau_X$ and $\tau_Y$ denote the actions of $\tau$ on $X$ and
  $Y$, respectively.  But this last equation is true since we are
  assuming that $\varphi$ is $\langle\tau\rangle$-equivariant.
  
  (b) We know that $\varphi=\alpha_*$ is $\PGLn$-equivariant.  So it
  suffices to show that $\tau_Y\circ\varphi=\varphi\circ\tau_X$.  Note
  that $\varphi^*=\alpha$.  Let $f\in k_n(Y)$.  We are assuming that
  $\varphi^*(\sigma_{Y,\tau}(f))=\sigma_{X,\tau}(\varphi^*(f))$.
  Arguing as in part~(a), we deduce from this that
  \[f\circ(\tau_Y\circ\varphi)=f\circ(\varphi\circ\tau_X)
    \qquad\text{for all $f\in k_n(Y)$.} \] 
  By the next lemma, this implies that
  $\tau_Y\circ\varphi=\varphi\circ\tau_X$. 
\end{proof}

\begin{lemma}
  Let $Y$ be an irreducible generically free $\PGLn$-variety, and $X$
  any variety.  Let $\varphi_1$ and $\varphi_2$ be two dominant
  rational maps from $X$ to $Y$ such that
  $f\circ\varphi_1=f\circ\varphi_2$ for all $f\in k_n(Y)$. Then
  $\varphi_1=\varphi_2$.
\end{lemma}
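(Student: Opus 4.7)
The plan is to realize $Y$ birationally as a $\PGLn$-stable subvariety of some $\Mnm$ via a tuple in $k_n(Y)$, and then compose with the birational inverse.

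Because $Y$ is irreducible and $\PGLn$-generically free, the category equivalence recalled in the introduction (see \cite[Theorem~7.8]{rv1}) identifies $Y$ as an associated variety of the central simple algebra $A=k_n(Y)$. The associated-variety construction reviewed in the proof of Proposition~\ref{prop6} (ignoring the involution) supplies elements $f_1,\ldots,f_m \in k_n(Y)$ for which the $\PGLn$-equivariant rational map
\[\Psi=(f_1,\ldots,f_m)\colon Y \dasharrow \Mnm\]
is birational onto its closed $\PGLn$-stable image $Y'\subseteq \Mnm$. Let $\Psi\inv\colon Y' \dasharrow Y$ denote the birational inverse.

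By hypothesis, $f_i \circ \varphi_1 = f_i \circ \varphi_2$ as rational maps $X \dasharrow \Mn$ for each $i$. Assembling these coordinate by coordinate gives
\[\Psi \circ \varphi_1 = \Psi \circ \varphi_2\]
as rational maps $X \dasharrow \Mnm$. Since each $\varphi_j$ is dominant, for $x\in X$ in general position $\varphi_j(x)$ lies in the dense open subset of $Y$ on which $\Psi\inv \circ \Psi = \id_Y$; composing on the left with $\Psi\inv$ therefore yields
\[\varphi_1 = \Psi\inv \circ \Psi \circ \varphi_1 = \Psi\inv \circ \Psi \circ \varphi_2 = \varphi_2.\]

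The only substantive ingredient---and hence the main obstacle---is the existence of the birational embedding $\Psi$, i.e., the fact that a generically free irreducible $\PGLn$-variety admits a $\PGLn$-equivariant birational map to a closed subvariety of some $\Mnm$ whose components are equivariant maps into $\Mn$. This is not immediate from generic freeness alone; it rests on the associated-variety machinery of \cite{rv1}, which is already in force here. Once that embedding is in hand, the rest of the argument is just a formal composition with $\Psi\inv$.
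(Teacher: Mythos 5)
Your proof is correct and follows essentially the same route as the paper: the paper invokes \cite[Lemma~8.1]{rv1} to replace $Y$ by a $\PGLn$-subvariety $Y'$ of $\Mnm$ and then applies the hypothesis to the coordinate projections $p_i\in k_n(Y')$, which is exactly your argument with the birational embedding $\Psi$ and its inverse carried along explicitly rather than absorbed into a ``we may assume $Y=Y'$'' reduction. The one substantive ingredient you correctly isolate---the equivariant birational embedding into $\Mnm$---is precisely the cited lemma, so there is no gap.
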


\begin{proof}
  By \cite[Lemma 8.1]{rv1}, $Y$ is birationally isomorphic (as
  $\PGLn$-variety) to a $\PGLn$-subvariety $Y'$ of $\Mnm$ for some
  integer $m\geq 2$.  Thus we may assume $Y=Y'$.  Substituting $f =
  p_i$ for $i = 1, \dots, m$, where $p_i \colon Y \lra \Mn$ is the
  projection from $Y \subseteq \Mnm$ to the $i$th copy of $\Mn$, we
  see that $\varphi_1=\varphi_2$.
\end{proof}

\begin{corollary}\label{cor-to-lem9}
  The associated $\pntau$-variety of a central simple algebra with
  involution is unique up to birational isomorphism of
  $\pntau$-varieties. 
\end{corollary}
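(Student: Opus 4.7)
The plan is to deduce the corollary almost immediately from the $\PGLn$-level category equivalence recalled in the introduction, upgraded via Lemma~\ref{lem9}(b). The key observation is that Lemma~\ref{lem9} provides a $\pntau$-enhancement of the $\textit{Bir}_n\leftrightarrow \textit{CS}_n$ equivalence, so the desired uniqueness up to $\pntau$-birational isomorphism should follow by the same formal argument that already gives uniqueness up to $\PGLn$-birational isomorphism in the involution-free setting.

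In detail, I would start with two irreducible $\pntau$-varieties $X$ and $Y$, each $\PGLn$-generically free, both associated to a given central simple algebra with involution $(A,\sigma)$ in the sense of Proposition~\ref{prop6}. Fixing isomorphisms $(k_n(X),\sigma_{X,\tau})\cong(A,\sigma)\cong(k_n(Y),\sigma_{Y,\tau})$ of algebras with involution and composing them (and their inverses) produces mutually inverse involution-preserving $k$-algebra isomorphisms $\alpha\colon k_n(X)\lra k_n(Y)$ and $\beta=\alpha\inv\colon k_n(Y)\lra k_n(X)$.

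Next, applying the $\PGLn$-level equivalence from the introduction, $\alpha$ and $\beta$ give rise to dominant $\PGLn$-equivariant rational maps $\alpha_*\colon Y\dasharrow X$ and $\beta_*\colon X\dasharrow Y$. The functorial identities $(\alpha\circ\beta)_*=\beta_*\circ\alpha_*$ and $(\id)_*=\id$ recalled there then force $\alpha_*$ and $\beta_*$ to be mutually inverse as rational maps, so they constitute a birational isomorphism between $X$ and $Y$ as $\PGLn$-varieties. Since $\alpha$ and $\beta$ both preserve the involutions, Lemma~\ref{lem9}(b) applied to each yields that $\alpha_*$ and $\beta_*$ are in fact $\pntau$-equivariant. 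Hence $X$ and $Y$ are birationally isomorphic as $\pntau$-varieties, as required.

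I do not anticipate any significant obstacle: the corollary is a formal consequence of material already in place. The one non-automatic step is the upgrade from $\PGLn$-equivariance to $\pntau$-equivariance of the induced birational map, and this has already been isolated as Lemma~\ref{lem9}(b); everything else is bookkeeping with the functorial formalism $(\alpha_*)^*=\alpha$, $(\beta\circ\alpha)_*=\alpha_*\circ\beta_*$.
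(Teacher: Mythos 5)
Your proposal is correct and follows essentially the same route as the paper: obtain an involution-preserving isomorphism between $k_n(X)$ and $k_n(Y)$, note that the induced $\PGLn$-equivariant map is a birational isomorphism (the paper simply cites \cite[Corollary~8.6]{rv1} where you unwind the functorial identities), and then upgrade to $\pntau$-equivariance via Lemma~\ref{lem9}(b). No gaps.
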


\begin{proof}
  Let $(A,\sigma)$ and $X$ be as in Proposition~\ref{prop6}.  Say $Y$
  is another associated $\pntau$-variety of $(A,\sigma)$.  Then there
  is an isomorphism of algebras with involution $\alpha\colon
  k_n(Y)\lra k_n(X)$.  By \cite[Corollary~8.6]{rv1}, the induced
  $\PGLn$-equivariant dominant rational map $\alpha_*\colon
  X\dasharrow Y$ is a birational isomorphism, and it is
  $\pntau$-equivariant by Lemma~\ref{lem9}.
\end{proof}

The following proof is modeled after the proof of
\cite[Theorem~1.2]{rv1}. 

\begin{proof}[Proof of Theorem~\ref{thm}]
  Denote by $\mathcal C$ the category of central simple algebras of
  degree~$n$ with involution, and by $\mathcal P$ the category of
  irreducible $\pntau$-varieties which are generically free as
  $\PGLn$-varieties.  By Lemmas~\ref{lem4} and~\ref{lem9}(a), the
  assignment~$\mathcal F$ in Theorem~\ref{thm} is well-defined.  By
  \cite[Corollary~8.6(a)]{rv1}, $\mathcal F$ is a contravariant
  functor from $\mathcal P$ to $\mathcal C$.  If $\varphi$ and
  $\alpha$ are as in Lemma~\ref{lem9}, then $(\varphi^*)_*=\varphi$
  and $(\alpha_*)^*=\alpha$, see \cite[Proposition~8.4]{rv1}.  Hence
  $\mathcal F$ is full and faithful.  Moreover, by
  Proposition~\ref{prop6}, every object in $\mathcal C$ is isomorphic
  to the image of an object in $\mathcal P$.  Hence $\mathcal F$ is a
  covariant equivalence of categories between $\mathcal P$ and the
  dual category of $\mathcal C$, see, e.g., \cite[Theorem~7.6]{blyth}.
  In other words, $\mathcal F$ is a contravariant equivalence of
  categories between $\mathcal P$ and $\mathcal C$, as claimed.
\end{proof}

\section{Involutions of the First and Second Kind}
\label{sec:inv-kind}

In this section, we prove a characterization of involutions of the
first and second kind in terms of the generic freeness of the action
of $\pntau$ on the associated variety.  As an example, we then
consider the involution $\rho$ of $\UDmn$, which is of the first kind
if and only if $m=n=2$.

\begin{lemma}\label{lem:inv-second-kind}
  Let $X$ and $\sigma_{X,\tau}$ be as in Lemma~\ref{lem4}.  If
  $\sigma_{X,\tau}$ is an involution of the second kind, then the
  $\pntau$-action on $X$ is generically free.
\end{lemma}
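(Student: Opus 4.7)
The plan is to prove the contrapositive by exhibiting, for generic $x\in X$, a $\PGLn$-invariant rational function on $X$ whose values at $x$ and $\tau x$ differ; this will force $\tau x$ to lie outside the $\PGLn$-orbit of $x$, and combined with $\PGLn$-generic freeness it gives triviality of the full $\pntau$-stabilizer.

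First I would identify the induced involution on the center of $A=k_n(X)$. A central element of $k_n(X)$ is a $\PGLn$-equivariant rational map $\lambda\colon X\dasharrow\Mn$ taking values in the scalar matrices, so it corresponds to a $\PGLn$-invariant $\tilde\lambda\in k(X)^{\PGLn}$. Using formula~\eqref{eq2} and the fact that scalar matrices are fixed under transposition, $\sigma_{X,\tau}(\lambda)$ corresponds to $\tilde\lambda\circ\tau$. Thus the restriction of $\sigma_{X,\tau}$ to the center $k(X)^{\PGLn}$ is precisely the action of $\tau$ on $k(X)^{\PGLn}$. Since $\sigma_{X,\tau}$ is of the second kind, this action is nontrivial, so there exists $\tilde\lambda\in k(X)^{\PGLn}$ with $\tilde\lambda\circ\tau\neq\tilde\lambda$ in $k(X)$.

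Next I would pass to a convenient dense open set. Let $U_0\subseteq X$ be the open locus where $\PGLn$ acts freely. Let $V\subseteq U_0$ be the dense open subset on which both $\tilde\lambda$ and $\tilde\lambda\circ\tau$ are regular and take distinct values; this is dense open because $\tilde\lambda\circ\tau\neq\tilde\lambda$ in the function field $k(X)$. For $x\in V$, any element of $\pntau$ fixing $x$ either lies in $\PGLn$ (and so must be trivial since $x\in U_0$) or has the form $\tau g$ for some $g\in\PGLn$. In the latter case $\tau x=g\inv x\in\PGLn\cdot x$, and $\PGLn$-invariance of $\tilde\lambda$ yields $\tilde\lambda(\tau x)=\tilde\lambda(x)$, contradicting $x\in V$. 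Hence the $\pntau$-stabilizer of every $x\in V$ is trivial, which is exactly generic freeness.

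I do not anticipate a real obstacle; the only step that requires a small amount of care is the identification of $\sigma_{X,\tau}|_{\Z(A)}$ with the $\tau$-action on $k(X)^{\PGLn}$ (and the corresponding verification that "second kind" translates into the existence of a $\PGLn$-invariant separating $x$ from $\tau x$ on a dense open set). Everything else is bookkeeping on the stabilizer coset decomposition of $\pntau=\PGLn\sqcup\PGLn\tau$.
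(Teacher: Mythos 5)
Your proof is correct. The first half --- identifying the restriction of $\sigma_{X,\tau}$ to $\Z(k_n(X))\cong k(X)^{\PGLn}$ with precomposition by $\tau$, and translating ``second kind'' into the existence of some $\tilde\lambda\in k(X)^{\PGLn}$ with $\tilde\lambda\circ\tau\neq\tilde\lambda$ --- is exactly the starting point of the paper's argument. Where you diverge is in how you get from that nontriviality to generic freeness. The paper passes to a model $Y$ of the rational quotient $X/\PGLn$ on which $\tau$ acts regularly, observes that $k(Y)$ has degree $2$ over $k(Y)^{\langle\tau\rangle}$ so that the general fibers of the composite quotient map $X\dasharrow Y/\langle\tau\rangle$ consist of two distinct $\PGLn$-orbits, and concludes via the index-$2$ coset decomposition. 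Your route is more direct and more elementary: a single separating invariant $\tilde\lambda$ suffices, and you only need the easy implication that $\tau x\in\PGLn\cdot x$ forces $\tilde\lambda(\tau x)=\tilde\lambda(x)$, whereas the paper's fiber count implicitly relies on Rosenlicht-type facts about rational quotients (that invariant functions separate orbits in general position). What the paper's version buys in exchange is the explicit description of the general $\pntau$-orbit as a union of two $\PGLn$-orbits, which is reused elsewhere (e.g.\ in the proof of Lemma~\ref{lem:new}). One cosmetic slip: if the stabilizing element is $\tau g$ then $\tau x=gx$ rather than $g\inv x$ (you get $g\inv$ if you write the element as $g\tau$); either way $\tau x$ lies in $\PGLn\cdot x$, so nothing breaks.
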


Note that the converse is also true, see 
Corollary~\ref{cor:kind} below.

\begin{proof}
  Let $\sigma=\sigma_{X,\tau}$.  Recall that the center of $k_n(X)$ is
  $k(X)^\PGLn=k(Y)$, where $Y$ is any model of the rational quotient
  $X/\PGLn$ for the action of $\PGLn$ on $X$.  Since $\PGLn$ is a
  normal subgroup of $\pntau$, the $\pntau$-action on $k(X)$ induces
  an action of $\langle\tau\rangle\cong\pntau/\PGLn$ on $k(Y)$ as
  follows: For $f\in k(Y)$,
  $\tau(f)=f\circ\tau\inv=f\circ\tau=\sigma(f)$, where the last
  equality holds since the image of $f$ consists of scalar matrices.
  Thus $k(Y)^{\langle\tau\rangle}$ is the fixed field of the center of
  $k_n(X)$ under the action of the involution $\sigma$.  Since
  $\sigma$ is an involution of the second kind, $k(Y)$ is thus a field
  extension of $k(Y)^{\langle\tau\rangle}$ of degree~$2$.

  We can chose $Y$ to be a model for the rational quotient $X/\PGLn$
  such that the action of $\pntau$ on $X$ induces a regular action of
  $\langle\tau\rangle\cong\pntau/\PGLn$ on $Y$,
  cf.~\cite[Proposition~2.6 and Corollary to Theorem~1.1]{pv}.  Denote
  by $\pi_1\colon X\dasharrow Y$ and $\pi_2\colon Y\dasharrow
  Y/\langle\tau\rangle$ the rational quotient maps for the actions of
  $\PGLn$ and $\langle\tau\rangle$, respectively.  Then
  $Y/\langle\tau\rangle$ is a rational quotient for the
  $\pntau$-action on $X$, with quotient map $\pi_2\circ\pi_1$.

  Because $k(Y)$ is a field extension of $k(Y)^{\langle\tau\rangle}$
  degree~$2$, the fibers in general position of $\pi_2$ consist of two
  points.  The fibers in general position of $\pi_1$ consist of the
  distinct $\PGLn$-orbits in $X$.  Thus the fibers in general position
  of the quotient map $\pi_2\circ\pi_1$ for the rational quotient for
  the $\pntau$ action on $X$ consist of two distinct $\PGLn$-orbits.
  Since the $\PGLn$-action on $X$ is generically free, and since
  $\PGLn$ has index~$2$ in $\pntau$, it follows that the
  $\pntau$-action on $X$ is generically free.
\end{proof}

\begin{corollary}\label{cor:inv-first-kind}%
  Consider the involution $\sigma_{\tau,X}$ in Lemma~\ref{lem4}.  The
  following are equivalent:
  \begin{enumerate}
  \item $\sigma_{\tau,X}$ is an involution of the first kind.
  \item The $\pntau$-orbits in $X$ in general position are $\PGLn$-orbits.
  \item The $\pntau$-orbits in $X$ in general position are irreducible.
  \item The $\pntau$-action on $X$ is not generically free.
  \end{enumerate}
\end{corollary}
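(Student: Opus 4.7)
The plan is to establish the cycle $(1)\Rightarrow(2)\Rightarrow(3)\Rightarrow(4)\Rightarrow(1)$. The last implication is immediate: since every involution is of the first or second kind, the contrapositive of Lemma~\ref{lem:inv-second-kind} says that if the $\pntau$-action fails to be generically free, then $\sigma_{X,\tau}$ is not of the second kind and hence is of the first kind.

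For $(1)\Rightarrow(2)$, I would adapt the first paragraph of the proof of Lemma~\ref{lem:inv-second-kind}. If $\sigma_{X,\tau}$ is of the first kind, its restriction to the center $k(Y)=k(X)^{\PGLn}$ of $k_n(X)$ is the identity. On a central element~$f$, the formula $\sigma_{X,\tau}(f)=\text{transposition}\circ f\circ\tau$ reduces to $\sigma_{X,\tau}(f)=f\circ\tau$ because central elements take values in scalar matrices, on which transposition acts trivially. Hence $\tau$ fixes $k(Y)$ pointwise. Choosing $Y$ (as in Lemma~\ref{lem:inv-second-kind}) so that the $\pntau$-action on~$X$ induces a regular $\langle\tau\rangle$-action on~$Y$, the rational quotient map $\pi_2\colon Y\dasharrow Y/\langle\tau\rangle$ is therefore birational, so its generic fibres are singletons. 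It follows that the generic fibres of $\pi_2\circ\pi_1\colon X\dasharrow Y/\langle\tau\rangle$---which are the $\pntau$-orbits in general position---coincide with the generic fibres of $\pi_1$, which are the $\PGLn$-orbits in general position.

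The implication $(2)\Rightarrow(3)$ is immediate because $\PGLn$ is connected, so its orbits are irreducible. For $(3)\Rightarrow(4)$ I would decompose the $\pntau$-orbit of $x\in X$ as
\[\pntau\cdot x=\PGLn\cdot x\;\cup\;\PGLn\cdot\tau(x),\]
a union of two $\PGLn$-orbits of equal dimension. Two distinct orbits of the same dimension cannot satisfy a containment between one and the closure of the other (their closures are both irreducible of that dimension, so such a containment would force the closures to coincide, and hence the orbits, each the open top-dimensional stratum of that common closure, to coincide). Thus if $\PGLn\cdot x\neq\PGLn\cdot\tau(x)$, the two pieces can be separated by closed subsets and the union is reducible. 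Assuming~$(3)$, we therefore have $\tau(x)=g\cdot x$ for some $g\in\PGLn$, and then $g^{-1}\tau$ is a nontrivial element of the $\pntau$-stabilizer of~$x$ (nontrivial because it lies in the nonidentity coset $\PGLn\tau$). Applied to generic $x$, this shows the $\pntau$-action is not generically free.

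The hard part, modest as it is, will be the irreducibility step in $(3)\Rightarrow(4)$: one must argue cleanly that two distinct top-dimensional $\PGLn$-orbits inside a single $\pntau$-orbit are Zariski-separated, so their union is reducible. Everything else either quotes Lemma~\ref{lem:inv-second-kind} directly or follows from elementary properties of connected algebraic group actions.
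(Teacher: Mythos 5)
Your proposal is correct and follows essentially the same route as the paper: the same cycle $(1)\Rightarrow(2)\Rightarrow(3)\Rightarrow(4)\Rightarrow(1)$, with $(4)\Rightarrow(1)$ quoted from Lemma~\ref{lem:inv-second-kind}, $(1)\Rightarrow(2)$ resting on the fact that central elements take scalar values so $\sigma$ acts on $k(X)^{\PGLn}$ as composition with $\tau$, and the connectedness of $\PGLn$ versus the disconnectedness of $\pntau$ for the remaining steps. The only cosmetic differences are that you phrase $(1)\Rightarrow(2)$ via the rational quotient maps $\pi_1,\pi_2$ where the paper invokes finitely many invariants separating $\PGLn$-orbits in general position, and you spell out $(3)\Rightarrow(4)$ (correctly) where the paper declares it clear.
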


\begin{proof}
  Let $\sigma=\sigma_{\tau,X}$.  By definition, $\sigma$ is an
  involution of the first kind iff $\sigma(f)=f$ for all $f \in
  \Z(A)=k(X)^\PGLn$ .

  (1) $\Longrightarrow$ (2): For $f \in k(X)^\PGLn$, and for $x \in X$
  in general position,
  \[f(x) = \sigma(f)(x) = f(\tau x)\tp = f(\tau x)\,,\] 
  where the last equality follows from $f \in k(X)^\PGLn$.  Since
  finitely many $f \in k(X)^\PGLn$ separate the $\PGLn$-orbits in $X$
  in general position, this implies that $\tau(x)$ belongs to the
  $\PGLn$-orbit of $x$.  Hence the $\pntau$-orbits in general position
  in $X$ are just the $\PGLn$-orbits.

  (2) $\Longrightarrow$ (3):  Clear since $\PGLn$ is connected.

  (3) $\Longrightarrow$ (4):  Clear since $\pntau$ is not connected.

  (4) $\Longrightarrow$ (1): By Lemma~\ref{lem:inv-second-kind},
  $\sigma$ cannot be of the second kind.
\end{proof}

\begin{corollary}\label{cor:kind}
  Let $A$ be a central simple algebra of degree~$n$ with
  involution~$\sigma$.  Let $X$ be the associated $\pntau$-variety.
  Then $\sigma$ is an involution of the first kind if and only if the
  action of $\pntau$ on $X$ is not generically free.  Equivalently,
  $\sigma$ is an involution of the second kind if and only if the
  action of $\pntau$ on $X$ is generically free.
\end{corollary}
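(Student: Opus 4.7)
The plan is to package this as an almost immediate consequence of the two preceding results, with no real new content. First, by Proposition~\ref{prop6}, there is an isomorphism of algebras with involution $(A,\sigma) \cong (k_n(X),\sigma_{X,\tau})$ for the associated $\pntau$-variety $X$. Since being of the first (resp.\ second) kind is a property intrinsic to an involution on a central simple algebra (it is determined by the action of the involution on the center), this property is preserved under the isomorphism. Hence $\sigma$ is of the first kind if and only if $\sigma_{X,\tau}$ is.

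Next, I would apply Corollary~\ref{cor:inv-first-kind}: the equivalence of its conditions~(1) and~(4) says exactly that $\sigma_{X,\tau}$ is of the first kind if and only if the $\pntau$-action on $X$ is not generically free. Chaining this with the identification from the previous step gives the first statement of the corollary.

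For the ``equivalently'' clause, I would invoke the standard dichotomy that every involution of a central simple algebra is of the first or the second kind, depending on whether its restriction to the center is trivial or not. Under this dichotomy, ``$\sigma$ is of the second kind'' is the negation of ``$\sigma$ is of the first kind,'' while ``the $\pntau$-action on $X$ is generically free'' is the negation of ``the $\pntau$-action on $X$ is not generically free.'' Negating both sides of the first assertion then yields the second.

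There is no serious obstacle here: the substantive geometric input (the rational quotient / index-two subgroup argument) was already carried out in Lemma~\ref{lem:inv-second-kind}, and the orbit-separation argument was carried out in Corollary~\ref{cor:inv-first-kind}. This corollary's role is purely to transfer the conclusion from the concrete setting ``$\sigma_{X,\tau}$ on $k_n(X)$'' to the abstract setting ``$\sigma$ on $A$,'' which is handled by Proposition~\ref{prop6}.
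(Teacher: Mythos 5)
Your proof is correct and follows essentially the same route as the paper: reduce via Proposition~\ref{prop6} to the case $(A,\sigma)=(k_n(X),\sigma_{X,\tau})$ and then invoke the equivalence (1)$\Leftrightarrow$(4) of Corollary~\ref{cor:inv-first-kind}, with the second clause being the contrapositive under the first-kind/second-kind dichotomy. No gaps.
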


\begin{proof}
  By Proposition~\ref{prop6}, we may assume that
  $(A,\sigma)=(k_n(X),\sigma_{\tau,X})$, where $\sigma_{\tau,X}$ is as
  in Lemma~\ref{lem4}.  Now the result immediately follows from
  Corollary~\ref{cor:inv-first-kind}.
\end{proof}

\begin{remark}\label{rem:richardson}
  One can use a theorem of Richardson's to somewhat simplify the
  proofs in this section (though not in
  Section~\ref{sec:orth-and-symp}).  Let $X$ be an irreducible
  $\pntau$-variety which is $\PGLn$-generically free.  Since $\PGLn$
  has index~$2$ in $\pntau$, it follows that for $x\in X$ in general
  position, the stabilizer of $x$ in $\pntau$ has at most $2$
  elements.  In particular, these stabilizers are reductive.  Hence by
  a theorem of Richardson \cite[Theorem~9.3.1]{richardson}, there is a
  stabilizer in general position for the $\pntau$-action on $X$.
  Using this fact, one can now obtain Lemma~\ref{lem:inv-second-kind}
  as an immediate consequence of Corollary~\ref{cor:inv-first-kind},
  if one modifies the proof of the latter result as follows:

  (2) $\Longrightarrow$ (1): Let $f \in k(X)^\PGLn$. If $x \in X$ is
  in general position, there is an $h \in \PGLn$ such that
  $\tau(x)=h(x)$.  Then $\sigma(f)(x) = f(\tau x)\tp = f(\tau x) =
  f(hx) = f(x)$.  Hence $\sigma(f)=f$.

  (4) $\Longrightarrow$ (2): If a stabilizer in general position
  exists, it cannot be the trivial subgroup.  Since the $\PGLn$-action
  on $X$ is generically free, for $x\in X$ in general position, there
  is thus a $g\in\pntau\setminus\PGLn$ such that $gx=x$.  Since
  $\pntau=\PGLn\cup\PGLn g$, the $\pntau$-orbit of $x$ is equal to the
  $\PGLn$-orbit of $x$.
\end{remark}

\begin{example} \label{example:rho:first-kind} {\it The involution
    $\rho$ of $\UDmn$ {\upshape(}see
    Example~\ref{example:rho}{\upshape)} is of the first kind if and
    only if $m=n=2$.}  For $n\geq 3$, this follows
  from~\cite[Proposition~7.2]{reichstein:matrix-inv}, which states
  that the restriction of $\rho$ to the center of $\UDmn$ cannot be
  induced by an automorphism of the corresponding trace ring, so in
  particular cannot be the identity map.  Also the case $n=2$ can be
  handled with the methods of~\cite{reichstein:matrix-inv}.  For the
  convenience of the reader, we include the short arguments.

  Note that the (reduced) trace on $\UDmn$ is $\rho$-equi\-variant,
  see \cite[Corollaries~2.2 and~2.16]{KMRT}.  Assume first that
  $m=n=2$.  Denote the two generic matrices generating $\UD(2,2)$ by
  $X$ and $Y$.  Then the center of $\UD(2,2)$ is generated by
  $\tr(X)$, $\tr(Y)$, $\tr(X^2)$, $\tr(Y^2)$ and $\tr(XY)$.  Since
  $\rho$ fixes $X$, $Y$, $X^2$ and $Y^2$, it fixes the traces of these
  elements.  And $\rho(\tr(XY))=\tr(\rho(XY))=\tr(YX)=\tr(XY)$,
  showing that $\rho$ acts trivially on the center of $UD(2,2)$, so
  that $\rho$ is an involution of the first kind.  We will see in
  Section~\ref{sec:ex-rho-3} that $\rho$ is in this case an orthogonal
  involution.
  
  Assume next that $m\geq 3$, and let $X$, $Y$ and $Z$ be three
  distinct generic matrices in $\UDmn$.  Evaluating these generic
  matrices at the elementary matrix units $e_{1,2}$, $e_{2,2}$ and
  $e_{2,1}$, respectively, shows that $\tr(XYZ)\neq\tr(ZYX)$.  Since
  $\tr(ZYX)=\tr(\rho(XYZ))=\rho(\tr(XYZ))$, it follows that $\rho$ is
  of the second kind.
  
  Finally, assume that $m=2$ and $n\geq 3$.  Denote the two generic
  matrices generating $\UDmn$ by $X$ and $Y$.  Evaluating $X$ and $Y$
  at $e_{1,2}+e_{2,3}$ and $e_{1,2}+e_{3,1}$, respectively, shows that
  $\tr(XYX^2Y^2)\neq\tr(Y^2X^2YX)$.  Consequently, $\rho$ is of the
  second kind.
  \hfill\qedsymbol
\end{example}

\begin{remark}\label{rem:pntau-on-Mnm}
  Using Corollary~\ref{cor:inv-first-kind}, we can reinterpret this
  example as stating that that the action of $\pntau$ on $\Mnm$ is
  generically free if and only if $(m,n)\neq(2,2)$.
\end{remark}

\section{Preliminaries on Transposition in  $\PGLn$}
\label{Section4}

Given $a\in\GL_n$, we denote by $\bar a$ its image in $\PGLn$.  Let
$a,b\in\GL_n$ such that $\bar a=\bar b$, i.e., $b=\lambda a$ for some
nonzero scalar $\lambda$.  Then $b\tp=\lambda a\tp$.  Hence we can
define $\bar a\tp=\overline{a\tp}$.  One easily checks that for
$a,b\in\GL_n$, $(\bar a\tp)\tp=\bar a$, that $(\bar a \bar b)\tp =\bar
b\tp \bar a\tp$, and that $(\bar a\inv)\tp=(\bar a\tp)\inv$.

Now let $a\in\GL_n$ such that $\bar a=\bar a\tp$.  Then $a\tp=\epsilon
a$ for some nonzero scalar~$\epsilon$.  Denoting the $i,j$-entry of
$a$ by $a_{i,j}$, it follows that $a_{i,j}=\epsilon
a_{j,i}=\epsilon^2a_{i,j}$.  Consequently $\epsilon=\pm1$, i.e., $a$
is either a symmetric or a skew-symmetric matrix.  If $b\in\GL_n$ with
$\bar b=\bar a$, then $b\tp=\epsilon b$.  This allows us to make the
following definition.

\begin{definition}\label{defn:symm:PGLn}
  Let $g\in\PGLn$ with $g\tp=g$.  We call $g$ {\it symmetric}
  (respectively, {\it skew-symmetric}) if one (and hence all)
  preimages of $g$ in $\GL_n$ are symmetric (respectively,
  skew-symmetric).
\end{definition}

\begin{remark}\label{sec4:rem1}
  Let $a\in \GL_n$ such that $a\tp=\epsilon a$ with $\epsilon=\pm1$.
  Then $\det(a)=\det(a\tp)=\epsilon^n\det(a)$, implying
  $\epsilon^n=1$.  Since $\epsilon=\pm1$, this implies $\epsilon=1$ if
  $n$ is odd.  So for $n$ odd, there are no skew-symmetric elements in
  $\PGLn$.  As we will see, this corresponds to the fact that
  involutions of the first kind on central simple algebras of odd
  degree cannot by symplectic, see \cite[Corollary~2.8(1)]{KMRT}.
\end{remark}

\begin{remark}\label{sec4:rem2}
  For future use, we record a few technical facts.  Again, let $a\in
  \GL_n$ such that $a\tp=\epsilon a$ with $\epsilon=\pm1$.  Then
  $(a\inv)\tp=\epsilon\inv a\inv=\epsilon a\inv$.  Hence if
  $g\in\PGLn$ is symmetric (respectively, skew-symmetric) then so is
  $g\inv$.  Recall that a matrix is {\em congruent} to $a$ if it is of
  the form $bab\tp$ for some $b\in \GL_n$.  Note that
  $(bab\tp)\tp=ba\tp b\tp=\epsilon(bab\tp)$.  Consequently, if
  $g\in\PGLn$ is symmetric (respectively, skew-symmetric), then so are
  $hgh\tp$ and $hg\inv h\tp$ for any $h\in \PGLn$.
\end{remark}

\begin{remark}\label{sec4:rem3-new}
  Let $g\in\PGLn$ be symmetric or skew-symmetric.  Given any
  $h\in\PGLn$, $\tau g$ is conjugate in $\pntau$ to $h\inv(\tau g)
  h=\tau(h\tp gh)$.  Recall that any non-singular symmetric matrix
  over an algebraically closed field of characteristic $\neq2$ is
  congruent to the identity matrix, while any non-singular
  skew-symmetric matrix is congruent to
  \begin{equation}\label{sec4:eq1-new}
    J=\begin{pmatrix} 0 & I\\ -I & 0 \end{pmatrix}\,,
  \end{equation}
  where $I$ is the identity matrix of order $\frac{n}{2}$ (see, e.g.,
  \cite[Sections~8.3 and 8.6]{cohn-1}).
  Consequently, if $g$ is symmetric, $\tau g$ is conjugate in $\pntau$
  to $\tau$, while if $g$ is skew-symmetric, $\tau g$ is conjugate in
  $\pntau$ to $\tau g_0$, where $g_0$ is the image in $\PGLn$ of the
  matrix~$J$.
\end{remark}

\section{Proof of Theorem~\ref{thm-stab}}
\label{sec:orth-and-symp}

By Proposition~\ref{prop6}, we may assume that
$(A,\sigma)=(k_n(X),\sigma_{\tau,X})$, where $\sigma_{\tau,X}$ is as
in Lemma~\ref{lem4}.  Part~(b) of Theorem~\ref{thm-stab} was already
proved in Corollary~\ref{cor:kind}.  Note that part~(a) will
immediately follow from part~(b), once we proved the existence of the
stabilizer in general position.

If $\sigma$ is an involution of the second kind, we already saw that
the action of $\pntau$ on $X$ is generically free and thus has a
stabilizer in general position, namely $S=\{1\}$.

Assume now that $\sigma$ is an involution of the first kind.  Since
the $\PGLn$-action on $X$ is generically free, and since $\PGLn$ has
index~$2$ in $\pntau$, Corollary~\ref{cor:inv-first-kind} implies that
every $x\in X$ in general position must have a stabilizer in $\pntau$
of order~$2$.  The nontrivial element in the stabilizer must be of
form $\tau g_x$ for some $g_x\in\PGLn$ (depending on $x$).  Note that
then $\tau x=g_xx$.  Hence $x=\tau^2x=\tau g_xx=((g_x)\inv)\tp \tau
x=((g_x)\inv)\tp g_x x$.  Since $((g_x)\inv)\tp g_x=((g_x)\tp)\inv
g_x$ belongs to $\PGLn$, it must be $1$, i.e., $(g_x)\tp=g_x$.  Hence
$g_x$ is either a symmetric or a skew-symmetric element of $\PGLn$
(see Definition~\ref{defn:symm:PGLn}).

Let $F$ be the center of $A$.  We denote by $\Sym(A,\sigma)$ and
$\Skew(A,\sigma)$ the $F$-subspaces of symmetric and skew-symmetric
elements of $A$ with respect to $\sigma$.  Let $\{a_i\}$ and $\{b_j\}$
be $F$-bases of $\Sym(A,\sigma)$ and $\Skew(A,\sigma)$, respectively.
For $x\in X$ in general position, $\{a_i(x)\}$ and $\{b_j(x)\}$ are
$k$-linearly independent subsets of $\Mn$ by
\cite[Lemma~7.4(a)]{reichstein}.
  
Denote by $U$ a dense open subset of $X$ such that all $x\in U$ have
the following properties: $a_i(x)$ and $b_j(x)$ are defined for all
$i$ and $j$, and the stabilizer of $x$ in $\pntau$ has two elements
(so that $g_x$ is defined).

\begin{lemma}\label{lem:U}
  Let $x\in U$.  Then $g_x$ is symmetric iff $\sigma$ is orthogonal.
\end{lemma}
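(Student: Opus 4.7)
\emph{Plan of proof.} The approach is to compute $\sigma(f)(x)$ explicitly in terms of $g_x$ and then use a dimension count on $\Sym(A,\sigma)$ and $\Skew(A,\sigma)$, evaluated at $x$, to match the symmetry type of $g_x$ with the type of $\sigma$. Fix a preimage $\tilde g_x\in\GL_n$ of $g_x$, and write $\tilde g_x\tp = \epsilon\tilde g_x$, with $\epsilon=1$ when $g_x$ is symmetric and $\epsilon=-1$ when $g_x$ is skew-symmetric.

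First, using $\tau x = g_xx$ and the $\PGLn$-equivariance of $f\in A$, formula~\eqref{eq2} gives
\[
  \sigma(f)(x) = f(\tau x)\tp = \bigl(\tilde g_x f(x) \tilde g_x\inv\bigr)\tp
    = (\tilde g_x\inv)\tp\, f(x)\tp\, \tilde g_x\tp,
\]
and since $(\tilde g_x\inv)\tp=\epsilon\tilde g_x\inv$ and $\tilde g_x\tp=\epsilon\tilde g_x$, the two factors of $\epsilon$ cancel, leaving $\sigma(f)(x) = \tilde g_x\inv f(x)\tp \tilde g_x$.

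Next, specialize to a basis vector $a_i$ of $\Sym(A,\sigma)$: the equation $\sigma(a_i)=a_i$ rearranges to $\tilde g_x a_i(x) = a_i(x)\tp \tilde g_x$, and transposing yields $\bigl(\tilde g_x a_i(x)\bigr)\tp = \epsilon\,\tilde g_x a_i(x)$. Thus $\tilde g_x a_i(x)$ is symmetric when $\epsilon=1$ and skew-symmetric when $\epsilon=-1$. The analogous computation for $f=b_j\in\Skew(A,\sigma)$ places $\tilde g_x b_j(x)$ in the opposite subspace of $\Mn$.

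Finally, by \cite[Lemma~7.4(a)]{reichstein}, the sets $\{a_i(x)\}$ and $\{b_j(x)\}$ are $k$-linearly independent in $\Mn$, and left-multiplication by $\tilde g_x\in\GL_n$ preserves independence. Writing $r=\dim_F\Sym(A,\sigma)$ and $s=\dim_F\Skew(A,\sigma)$, we obtain $r\leq\tfrac{n(n+1)}{2}$ and $s\leq\tfrac{n(n-1)}{2}$ when $\epsilon=1$, with the inequalities swapped when $\epsilon=-1$. Since $r+s=n^2$, equality must hold throughout, so $\epsilon=1$ if and only if $r=\tfrac{n(n+1)}{2}$, i.e., if and only if $\sigma$ is orthogonal. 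The only step requiring any real care is the $\epsilon$-cancellation in the formula for $\sigma(f)(x)$; after that, the dimension count is forced, with the invertibility of $\tilde g_x$ ensuring that $\{\tilde g_x a_i(x)\}$ and $\{\tilde g_x b_j(x)\}$ remain $k$-linearly independent subsets of the appropriate symmetry subspace of $\Mn$.
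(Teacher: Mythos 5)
Your proof is correct and follows essentially the same route as the paper: both hinge on the computation $\sigma(f)(x)=u\,f(x)^{t}u^{-1}$ for a $(\pm)$-symmetric preimage $u$ of $g_x^{\pm1}$, followed by evaluating $F$-bases of $\Sym(A,\sigma)$ and $\Skew(A,\sigma)$ at $x$, invoking \cite[Lemma~7.4(a)]{reichstein} for $k$-linear independence, and concluding by a dimension count. The only difference is cosmetic: the paper packages $M\mapsto uM^{t}u^{-1}$ as an auxiliary involution $\tilde\sigma$ of $\Mn$ and cites \cite[Propositions~2.6(1) and~2.7]{KMRT} to compare its type with that of $\sigma$, whereas you verify directly that $\tilde g_x a_i(x)$ and $\tilde g_x b_j(x)$ land in the spaces of symmetric and skew-symmetric matrices and then use the standard dimension characterization ($\dim_F\Sym(A,\sigma)=\tfrac{n(n+1)}{2}$ iff $\sigma$ is orthogonal), which makes the argument marginally more self-contained.
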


\begin{proof}
  For simplicity, set $g=g_x$.  Let $u$ be a preimage of $g\inv$ in
  $\GL_n$.  So $u\tp=\pm u$.  Denote by $\Int(u)$ conjugation by $u$
  on $\Mn$.  Transposition defines an involution of orthogonal type of
  $\Mn$.  By \cite[Proposition 2.7(1)]{KMRT},
  \[\tilde\sigma=\Int(u)\circ\operatorname{transposition}\]
  is an involution of $\Mn$ of the first kind.  Moreover, by
  \cite[Proposition 2.7(3)]{KMRT}, $\tilde\sigma$ is of the same type
  as transposition (i.e., is of orthogonal type) iff $u\tp=u$.
  Consequently, $\tilde\sigma$ is of orthogonal type iff $u\tp=u$ iff
  $g\inv$ is symmetric iff $g$ is symmetric.

  Let $f$ be any element of $A$ defined at $x$.  Using the definition
  of $\sigma$ (see Lemma~\ref{lem4}) and the facts that $f$ is
  $\PGLn$-equivariant and that $u\tp=\pm u$, we see that
  \begin{align*}
    \sigma(f)(x)&=f(\tau x)\tp=f(gx)\tp=(g\cdot f(x))\tp=(u\inv
    f(x)u)\tp\\
    &=u\tp f(x)\tp (u\inv)\tp=uf(x)\tp u\inv=\tilde\sigma(f(x))\,.
  \end{align*}
  Hence $a_i(x)=\sigma(a_i)(x)=\tilde\sigma(a_i(x))$, and
  $-b_j(x)=\sigma(b_j)(x)=\tilde\sigma(b_j(x))$.  Thus
  $\dim_k(\Sym(\Mn,\tilde\sigma)) \geq \dim_F(\Sym(A,\sigma))$, and
  $\dim_k(\Skew(\Mn,\tilde\sigma)) \geq \dim_F(\Skew(A,\sigma))$.  It
  follows by \cite[Proposition 2.6(1)]{KMRT} that $\sigma$ and
  $\tilde\sigma$ are of the same type.  Hence $\sigma$ is of
  orthogonal type iff $\tilde\sigma$ is of orthogonal type iff $g$ is
  symmetric.
\end{proof}

Lemma~\ref{lem:U} has several immediate consequences.  First, for
$x\in U$, $g_x$ is skew-symmetric iff $\sigma$ is symplectic.
Secondly, either $g_x$ is symmetric for all $x\in U$, or $g_x$ is
skew-symmetric for all $x\in U$.

Now let $x,y\in U$.  So $g_x$ and $g_y$ are either both symmetric or
both skew-symmetric.  Hence there is an $h\in \PGLn$ such that
$g_y=h\tp g_xh$, cf.~Remark~\ref{sec4:rem3-new}.  Consequently, $\tau
g_y=\tau(h\tp g_xh)=h\inv(\tau g_x)h$ is conjugate to $\tau g_x$ in
$\pntau$.  Since the stabilizers of $x$ and $y$ in $\pntau$ are
$\{1,\tau g_x\}$ and $\{1,\tau g_y\}$, respectively, they are
conjugate in $\pntau$.  Since this is true for all $x$ and $y$ in the
dense open subset $U$ of $X$, it follows that there is a stabilizer in
general position for the $\pntau$-action on $X$.  Moreover, by
Remark~\ref{sec4:rem3-new}, the stabilizer in general position will be
conjugate to $\{1,\tau\}$ if $\sigma$ is orthogonal, and to $\{1,\tau
g_0\}$ if $\sigma$ is symplectic.  This completes the proof of
Theorem~\ref{thm-stab}.

\section{The Involution $\rho$ of $\UD(2,2)$}
\label{sec:ex-rho-3}

We now illustrate Theorem~\ref{thm-stab} %
in the case of the universal division algebra $\UD(2,2)$.  Recall from
Example~\ref{example:rho} that the $\pntau$-variety associated to
$(\UDmn,\rho)$ is $\Mnm$, where $\PGLn$ acts by component-wise
conjugation and $\tau$ acts by component-wise transposition.  We
observed in Example~\ref{example:rho:first-kind} that the involution
$\rho$ of $A=\UD(2,2)$ is an involution of the first kind, and that it
fixes both of the generic matrices generating~$A$.  Hence
$\dim\Sym(A,\rho)\geq 2$.  Since $\deg A=n=2$,
\cite[Proposition~2.6(1)]{KMRT} implies that $\rho$ is of orthogonal
type.  Hence by Theorem~\ref{thm-stab}(c), the action of $\tau$ on
$(\M_2)^2$ is of {\em symmetric type} (using the terminology of
Remark~\ref{rem:1.3}).  We now verify this directly.

Let $x=(x_1,x_2)\in(\M_2)^2$ be in general position.  We may assume
that $x_1$ is diagonalizable, and that $\M_2=k\{x_1,x_2\}$.  Hence
there is an $h\in\PGL_2$ such that $y=h\cdot x$ is of form
\[y=(y_1,y_2)=\bigl(\begin{pmatrix}\lambda&0\\0&\mu\end{pmatrix},
\begin{pmatrix}a&b\\c&d\end{pmatrix}\bigr)\,,\]
with $\lambda,\mu,a,b,c,d\in k$.  Since
$\M_2=k\{x_1,x_2\}=k\{y_1,y_2\}$, both $b,c\neq0$.  Denote by $g$ the
image in $\PGL_2$ of the diagonal matrix
$\bigl(\begin{smallmatrix}c&0\\0&b\end{smallmatrix}\bigr)$.
Note that $g$ is symmetric according to
Definition~\ref{defn:symm:PGLn}.  A short calculation shows that
$g\cdot y=y\tp$.  Hence $(gh)\cdot x=g\cdot y=y\tp=(h\cdot
x)\tp=(h\inv)\tp \cdot x\tp$, so that $\tau\cdot x=x\tp=(h\tp gh)\cdot
x$.  As we saw in Remark~\ref{sec4:rem2}, $h\tp gh$ is symmetric since
$g$ is.  It follows that the action of $\tau$ on $(\M_2)^2$ is of
symmetric type.

\section{Models for Associated Varieties}
\label{sec:new}

Up to birational isomorphism, the associated variety of a central
simple algebra can always be found in $\Mnm$ for some $m\geq2$, where
$\PGLn$ acts on $\Mnm$ by simultaneous conjugation.  To be more
precise, denote by $U_{m,n}$ the subset of $\Mnm$ consisting of all
tuples $(a_1,\ldots,a_m)$ in $\Mnm$ such that $a_1$, \ldots, $a_m$
generate $\Mn$ as $k$-algebra.  An $n$-variety is a closed
$\PGLn$-invariant subvariety of $U_{m,n}$ for some $m\geq2$.  It is
shown in~\cite[Lemma~8.1]{rv1} that every irreducible generically free
$\PGLn$-variety is $\PGLn$-equivariantly birationally isomorphic to an
irreducible $n$-variety in $U_{m,n}$ for some $m$.  Adapting the proof
of this lemma, we now obtain the corresponding result for the
associated varieties of central simple algebras with involution.

\begin{lemma}\label{lem:new}
  Let $X$ be an irreducible $\pntau$-variety which is generically free
  as $\PGLn$-variety.  If the action of $\pntau$ is of skew-symmetric
  type, assume additionally that $n>2$.  There is an irreducible
  $n$-variety $Y$ in $\Mnm$ for some $m\geq 2$ which is invariant
  under component-wise transposition {\upshape(}and thus a
  $\pntau$-variety{\upshape)} such that $X$ is $\pntau$-equivariantly
  isomorphic to $Y$.
\end{lemma}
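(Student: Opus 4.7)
The plan is to re-run the construction in the proof of Proposition~\ref{prop6}, but with the added requirement that the spanning set of the chosen $\sigma$-stable subspace $V$ lie inside $\Sym(A,\sigma)$. Doing so forces every sign $\epsilon_i$ in that argument to equal $+1$, so that the $\tau$-action on $\Mnm$ induced by the construction becomes precisely component-wise transposition. By Proposition~\ref{prop6} we may assume $(A,\sigma)=(k_n(X),\sigma_{X,\tau})$, and the task reduces to building the desired $Y\subseteq\Mnm$; the $\pntau$-equivariant birational isomorphism $X\dasharrow Y$ will then be produced by Corollary~\ref{cor-to-lem9}.

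Here is how the reduction will run in detail. Suppose one can find a finite-dimensional $k$-subspace $V\subseteq\Sym(A,\sigma)$ such that the $k$-subalgebra $R\subseteq A$ generated by $V$ is prime and has $A$ as its total ring of fractions. Picking any $k$-basis $r_1,\ldots,r_m$ of $V$ and mimicking the proof of Proposition~\ref{prop6}, one defines the surjection $\phi\colon\Gmn\to R$ by $X_i\mapsto r_i$, puts $I=\ker\phi$, and sets $Y=\mathcal Z(I)\subseteq\Mnm$. Everything in that proof carries over verbatim with all $\epsilon_i=+1$: $Y$ is a closed $\PGLn$-invariant subvariety of the appropriate dense open subset of $\Mnm$ (so is an $n$-variety inside some $U_{m,n}$), the natural map $k_n(Y)\cong A$ is an isomorphism of central simple algebras, and the $\tau$-action formula $\tau(a_1,\ldots,a_m)=(\epsilon_1 a_1\tp,\ldots,\epsilon_m a_m\tp)$ collapses to component-wise transposition. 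Stability of $Y$ under this involution and the identification $\sigma_{Y,\tau}=\sigma$ then follow directly from the argument in the proof of Proposition~\ref{prop6}, and Corollary~\ref{cor-to-lem9} supplies the $\pntau$-equivariant birational isomorphism $X\dasharrow Y$.

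The substantive step, and the main obstacle, is the algebraic claim that the $k$-subalgebra of $A$ generated by $\Sym(A,\sigma)$ admits $A$ as its total ring of fractions. I would handle this case by case using the classification. For $\sigma$ orthogonal, $F^\sigma=F\subseteq\Sym(A,\sigma)$ is finitely generated over $k$, and after extending scalars to an algebraic closure of $F$ one checks directly that symmetric matrices generate $\M_n$ as an $F$-algebra for every $n\geq 2$. For $\sigma$ symplectic with $n\geq 4$, the analogous computation on the split form $(\M_n,\,x\mapsto J\inv x\tp J)$ yields generation, using $\dim_F\Sym(A,\sigma)=n(n-1)/2\geq 6$; for $n=2$ the claim fails because $\Sym(A,\sigma)=F$, which is precisely why that case is excluded by the hypothesis. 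For $\sigma$ unitary, $F^\sigma\subseteq\Sym(A,\sigma)$ is finitely generated over $k$, and a short explicit calculation on the split form—where $\Sym(A,\sigma)$ becomes the space of Hermitian matrices and commutators of pairs of symmetric elements land in $\Skew(A,\sigma)$ and supply a generator of $F$ over $F^\sigma$—shows that $\Sym(A,\sigma)$ generates $A$ as an $F^\sigma$-algebra and hence as a $k$-algebra. In each admissible case the claim reduces to a concrete computation on a split model of $A$.
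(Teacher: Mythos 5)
Your proposal is essentially correct, but it takes a genuinely different route from the paper. The paper's proof is geometric: it splits into cases according to the stabilizer in general position $H$ from Theorem~\ref{thm-stab}; for $H=1$ it reruns the proof of \cite[Lemma~8.1]{rv1} with $\pntau$ in place of $\PGLn$ (using only the generic freeness of $\pntau$ on $(\Mn)^3$, Remark~\ref{rem:pntau-on-Mnm}), and for $H=\langle\tau\rangle$ or $\langle\tau g_0\rangle$ it passes to a stable affine model via \cite{rv-affine-stable}, produces a point $a_0\in U_{q,n}$ with stabilizer exactly $H$, and extends to a $\pntau$-equivariant morphism using \cite[Lemma~2.6]{rv2}. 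You instead work entirely on the algebra side, rerunning the proof of Proposition~\ref{prop6} with the generating subspace $V$ chosen inside $\Sym(A,\sigma)$ so that all signs $\epsilon_i$ are $+1$, and then invoking Corollary~\ref{cor-to-lem9}. This is a legitimate and arguably more self-contained strategy: it avoids the stable-model and orbit-extension machinery altogether. Note that the computational heart of the two proofs is nearly the same for involutions of the first kind --- the paper needs a tuple of $\tau$-fixed (resp.\ $\tau g_0$-fixed) matrices generating $\Mn$, which is exactly the split case of your generation claim --- whereas for unitary involutions the paper's route is strictly cheaper, since it needs no generation-by-symmetric-elements fact at all, while yours does.

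Two points in your sketch deserve more care. First, the reduction to the split form should be stated as a flat base change argument: the $F$- (resp.\ $F^\sigma$-) subalgebra generated by $\Sym(A,\sigma)$ commutes with scalar extension, so it suffices to check generation after splitting; and you must then still arrange, as in the proof of Proposition~\ref{prop6}, a finite-dimensional $V\subseteq\Sym(A,\sigma)$ containing generators of (a $k$-subalgebra with fraction field) $F^\sigma$ together with an $F^\sigma$-spanning set of $\Sym(A,\sigma)$, and verify that the central localization of $k\{V\}$ is all of $A$ rather than a proper central simple $F^\sigma$-subalgebra with $A$ as its scalar extension. Second, in the unitary case your stated mechanism is not quite right: a commutator of two symmetric elements lies in $\Skew(A,\sigma)$ but is not central, so it does not by itself supply a generator of $F$ over $F^\sigma$; one needs a further product landing a central skew element (equivalently, after splitting, one checks that products of Hermitian matrices span $\mathfrak{sl}_n$ in each factor of $\M_n\times\M_n^{\mathrm{op}}$ and hence generate everything). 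The conclusion you want is true, but as written this step is asserted rather than proved.
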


Here $\pntau$ acts on $\Mnm$ and $Y$ as in Example~\ref{example:rho}.
If the action of $\pntau$ is of skew-symmetric type (see
Remark~\ref{rem:1.3}), the result is not true in case $n=2$: an easy
calculation shows that $U_{2,m}$ does not contain points whose
stabilizer in $\pntau$ is $\langle \tau
\bigl[\begin{smallmatrix}0&\,\,1\\-1&\,\,0\end{smallmatrix}\bigr]\rangle$.

\begin{proof}
  The $\pntau$-action on $X$ has a stabilizer $H$ in general position
  as in Theorem~\ref{thm-stab}.  Assume first that $H=1$.  Then the
  proof of \cite[Lemma~8.1]{rv1} goes through with minor changes, if
  one replaces $\PGLn$ by $\pntau$ throughout: By
  Remark~\ref{rem:pntau-on-Mnm}, there is a dense open subset $V$ of
  $(\Mn)^3$ such that every element of $V$ has trivial stabilizer in
  $\pntau$.  Now set $m=r+3$, and construct a $\pntau$-equivariant
  rational map $\phi\colon X\dasharrow (\Mn)^3$ whose image meets
  $U_{3,n}\cap V$; the latter enables us to choose the subset
  $S\subset X$ in such a way that all points of $f(S)$ have trivial
  stabilizer in $\pntau$.

  Now assume that $H=\langle\tau\rangle$ or that $H=\langle\tau
  g_0\rangle$ where $g_0 =
  \bigl[\begin{smallmatrix}0&\,\,I\\-I&\,\,0\end{smallmatrix}\bigr]$.
  Since both $\pntau$ and $H$ are reductive, $X$ is
  $\pntau$-equivariantly birationally isomorphic to a stable affine
  $\pntau$-variety, see \cite[Theorem~1.1]{rv-affine-stable}.  Hence
  we may assume that $X$ is affine, and that the $\pntau$-action on
  $X$ is stable.  Pick $x_0\in X$ with stabilizer $H$ and such that
  the $\pntau$-orbit of $x_0$ is closed.  For some $q$, we can find
  $a_0\in U_{q,n}\subset(\Mn)^q$ such that also $a_0$ has stabilizer
  $H$: If $H=\langle\tau\rangle$, this is clear since $\Mn$ can be
  generated as $k$-algebra by $q$ symmetric matrices for some~$q$.
  And an easy computation deals with the case $n\geq4$ and
  $H=\langle\tau g_0\rangle$.  It follows by \cite[Lemma~2.6]{rv2} (a
  slight extension of \cite[Theorem 1.7.12]{popov}) that there is a
  $\pntau$-equivariant morphism $\phi\colon X\to(\Mn)^q$ with
  $\phi(x_0)=a_0$.

  It follows from the proof of Lemma~\ref{lem:inv-second-kind} (or
  from Corollary~\ref{cor:inv-first-kind}(b) and
  \cite[Remark~2.5]{reichstein}), that under the current hypotheses,
  $X/\pntau=X/\PGLn$, so that $k(X)^{\pntau}=k(X)^\PGLn$.

  Using the map $\phi$ we just constructed and setting $m=r+q$, the
  remainder of the proof of \cite[Lemma~8.1]{rv1} goes through nearly
  literally.  Note that since $k(X)^\PGLn=k(X)^{\pntau}$, $f$ is
  $\pntau$-equivariant.
\end{proof}

\end{document}